\newtheorem{thm}{Theorem}[section]
\newtheorem{lem}[thm]{Lemma}
\newtheorem{prop}[thm]{Proposition}
\theoremstyle{definition}
\newtheorem{note}[thm]{Note}
\newcommand{\be}{\begin{equation}}
\newcommand{\ee}{\end{equation}}
\newcommand{\ol}{\overline}
\newcommand{\R}{\mathbf{R}}
\newcommand{\C}{\mathcal{C}}
\renewcommand{\epsilon}{\varepsilon}
\renewcommand{\S}{\mathbf{S}}
\renewcommand{\tilde}{\widetilde}
\DeclareMathOperator{\inte}{int}
\DeclareMathOperator{\conv}{conv}
\DeclareMathOperator{\cl}{cl}
\title[Shortest closed curve to contain a sphere]{Shortest closed curve to contain a sphere\\ in its convex hull}
\author{Mohammad Ghomi}
\address{School of Mathematics, Georgia Institute of Technology,
Atlanta, GA 30332}
\email{ghomi@math.gatech.edu}
\urladdr{www.math.gatech.edu/~ghomi}
\author{James Wenk}
\address{School of Mathematics, Georgia Institute of Technology,
Atlanta, GA 30332}
\email{jwenk3@math.gatech.edu}
\urladdr{www.math.gatech.edu/~jwenk3}
\date{\today \,(Last Typeset)}
\subjclass[2000]{Primary: 53A04,  52A40; Secondary: 60G15, 58E}
\keywords{Inradius of convex bodies, Unfolding of space curves,  Sphere covering by equal disks,  Bellman's search problems,  Normal distribution, Gaussian correlation inequality.}
\thanks{Research of M.G. was supported in part by NSF Grant DMS--2202337.}
\begin{document}
\vspace*{-0.75in}

\maketitle

\begin{abstract}
We show that in Euclidean 3-space  any closed curve which contains the unit sphere  within its convex hull has length $L\geq4\pi$, and characterize the case of equality. This result generalizes the authors' recent solution to a  conjecture of  Zalgaller. Furthermore, for the analogous problem in $n$ dimensions, we  include the estimate $L\geq Cn\sqrt{n}$ by Nazarov,  which is sharp up to the constant $C$.
\end{abstract}


\section{Introduction}\label{sec:intro}
The \emph{convex hull} of a set $X$ in Euclidean space $\R^3$ is the intersection of all convex sets which contain $X$. The \emph{inradius} of $X$ is the supremum of the radii of spheres which are contained in $X$. Here we show:

\begin{thm}\label{thm:main}
Let $\gamma\colon [a,b]\to\R^3$ be a closed rectifiable curve of length $L$, and $r$ be the inradius of the convex hull of $\gamma$. Then
\begin{equation}\label{eq:main}
L\geq 4\pi r.
\end{equation}
Equality holds only if, up to a reparameterization, $\gamma$ is  simple, $\C^{1,1}$, lies on a sphere of radius $\sqrt2 \,r$, and traces consecutively $4$ semicircles of length $\pi r$.
\end{thm}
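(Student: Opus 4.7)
By rescaling I may assume $r=1$, so that a unit ball $B\subset\R^3$, which I place at the origin, lies in $\conv(\gamma)$. The convex-hull condition is then equivalent to the support-function inequality $h_\gamma(u):=\sup_t\langle\gamma(t),u\rangle\geq 1$ for every $u\in\S^2$, and the goal is to show $L\geq 4\pi$, with equality only for the claimed baseball configuration.

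The approach is variational. First I would establish existence of a length-minimizer $\gamma_0$ in the class of closed rectifiable curves satisfying the convex-hull constraint: take a minimizing sequence, reparametrize proportionally to arc length, and extract a uniform limit via Arzel\`a--Ascoli. Lower semicontinuity of length, combined with continuity of the support function under uniform (equivalently Hausdorff) convergence of the image, yields a minimizer $\gamma_0$ with $\conv(\gamma_0)\supset B$ and $L(\gamma_0)=\inf L$.

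Next I would analyze $\gamma_0$ structurally. First-order variations that preserve $h_{\gamma_0}\geq 1$ are expected to force $\gamma_0$ to be $\C^{1,1}$ and to decompose into finitely many extremal arcs. The anticipated structural statement is that each arc lies in a supporting plane of $B$, is a circular arc of radius $1$ centered at the point of tangency with $\partial B$, and joins its neighbors with matching tangents; this forces each arc to be a semicircle of length $\pi$, and all joining points to lie on the sphere of radius $\sqrt 2$. A covering-type argument on $\S^2$, in which each semicircle ``supplies'' the inequality $h_\gamma\geq 1$ only for a cap of directions near its contact point, then forces at least four such arcs, giving $L\geq 4\pi$ and the baseball configuration.

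The main obstacle, and the crux of the proof, will be turning these heuristics into a sharp lower bound. Standard integral geometry --- integrating the projection length of $\gamma$ over $\S^2$ --- only yields $L\geq 8<4\pi$, so the improvement must come from the combinatorial structure of the extremal. Reducing the count-of-arcs step to the sphere-covering problem for equal caps (one of the listed keywords) is the key nontrivial input. Once the four-arc structure is established, the equality characterization will follow from the rigidity of the associated four-cap configuration on $\S^2$, which pins down the four contact points up to rotation and produces exactly the baseball seam on the sphere of radius $\sqrt 2\, r$.
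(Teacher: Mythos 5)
Your proposal correctly identifies the starting point (scale to $r=1$, existence of a length-minimizer via Arzel\`a--Ascoli, as in Section \ref{sec:minimal} of the paper) and has the right heuristic picture of the extremal: four semicircular arcs of radius $1$ and length $\pi$, each lying in a tangent plane to the unit ball, joined with matching tangents along the sphere of radius $\sqrt2$. But the parts you flag as ``expected,'' ``anticipated,'' and ``the crux'' are exactly where the proof has to happen, and the route you sketch does not close the gap.

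Concretely, your plan proposes to (a) show by first variation that a minimizer decomposes into finitely many planar circular arcs, and (b) count arcs via a sphere-covering-by-caps argument. Neither step is carried out, and neither is what makes the paper's proof work. The paper does not establish a pointwise ``every arc is a unit circular arc'' structure theorem for the minimizer; instead it introduces the \emph{horizon} functional $H(\gamma)=\int_{\S^2}\#\gamma^{-1}(T_p\S^2)\,dp$, observes that $H(\gamma)\ge 8\pi$ for any closed inspection curve (each tangent plane meets a closed curve whose hull contains the ball at least twice), and then proves the \emph{efficiency} bound $E(\gamma)=H(\gamma)/L(\gamma)\le 2$, which immediately gives $L\ge 4\pi$. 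The efficiency bound in turn rests on two ideas you do not mention: \emph{unfolding} the space curve into a planar curve that preserves $|\gamma|$, arclength, and $E$ (Section \ref{sec:unfolding}), and a \emph{spiral decomposition} of the unfolded curve with each spiral shown to satisfy $E(\sigma)\le 2$ (Sections \ref{sec:spirals}--\ref{sec:proof}). Your ``covering-type argument on $\S^2$'' is closer in spirit to the Nazarov argument in the Appendix, but that method (Gaussian correlation inequality applied to slabs) is designed for the non-sharp $n$-dimensional estimate $L\ge Cn\sqrt n\,r$ and does not reproduce the sharp constant $4\pi$ in $\R^3$; so the keyword-driven ``reduce to cap covering'' plan would not yield the theorem. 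You also do not address rigidity: the paper obtains equality characterization by showing $E(\gamma)=2$ forces $|\gamma|\ge 1$, reducing to the already-proved Zalgaller case, whereas your sketch just asserts rigidity ``will follow.'' In short, you have the right picture of the answer but no mechanism that proves the inequality; the missing ideas are the horizon/efficiency reformulation and the unfolding-plus-spiral machinery.
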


 In 1996 V. A. Zalgaller \cite{zalgaller:1996,orourkeMO} conjectured that the above theorem holds subject to the additional assumption that  $\gamma$ lie outside a sphere $S$ of radius $r$ within its convex hull. The length minimizer, called the \emph{baseball curve}, together with $S$, is shown in Figure \ref{fig:baseball}.
 \begin{figure}[h]
\begin{overpic}[height=1.25in]{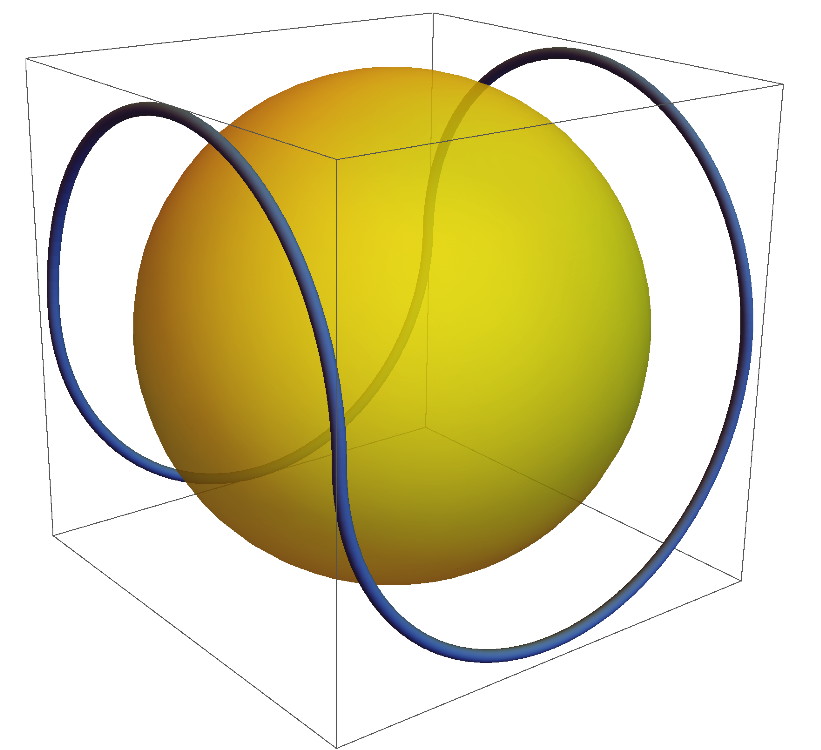}
\end{overpic}
\caption{The baseball curve}\label{fig:baseball}
\end{figure}
Zalgaller's conjecture was proved recently in \cite{ghomi-wenk2021} following earlier work in \cite{ghomi:lwr}. Here we refine the methods introduced in those papers to establish the more general result above.
Our approach will be similar to that in \cite{ghomi-wenk2021}. We start by setting $r=1$ and assuming that $\gamma$ has the smallest length among closed curves which contain the unit sphere $\S^2$ within their convex hull \cite[Sec 2.]{ghomi-wenk2021}. The \emph{horizon} of  $\gamma$ is the measure in $\S^2$ counted with multiplicity of the set of points $p\in\S^2$ where the affine tangent plane $T_p\S^2$ intersects $\gamma$:
$$
H(\gamma):=\int_{p\in\S^2} \#\gamma^{-1}(T_p\S^2)\, dp.
$$
Since $\gamma$ is closed, one quickly sees that $\#\gamma^{-1}(T_p\S^2)\geq 2$ for almost every $p\in\S^2$ \cite[Lem. 7.1]{ghomi:lwr}. Hence $H(\gamma)\geq 8\pi$. The \emph{efficiency} of  $\gamma$ is given by 
$$
E(\gamma):=\frac{H(\gamma)}{L(\gamma)}.
$$
So to establish \eqref{eq:main} it suffices to show that $E(\gamma)\leq 2$. To this end we note that for any partition of  $\gamma$ into subcurves $\gamma_i$,  
$$
E(\gamma)= \sum_i \frac{H(\gamma_i)}{L(\gamma)}=\sum_i \frac{L(\gamma_i)}{L(\gamma)} E(\gamma_i).
$$
So  it suffices to construct a partition with $E(\gamma_i)\leq 2$.  Similar to \cite{ghomi-wenk2021}, this is achieved by \emph{unfolding} $\gamma$ into the plane (Section \ref{sec:unfolding}), and identifying a collection of subcurves of $\gamma$ we call \emph{spirals} (Section \ref{sec:spirals}); however, these operations need to be generalized here as they were defined only for curves with $|\gamma|\geq 1$ in \cite{ghomi-wenk2021}. Furthermore, we will show that if $E(\gamma)=2$, then $|\gamma|\geq 1$. So the  rigidity of \eqref{eq:main}  follows from Zalgaller's conjecture established in \cite{ghomi-wenk2021}, and completes the proof of Theorem \ref{thm:main} (Section \ref{sec:proof}). 

For curves in $\R^2$ the isoperimetric inequality quickly yields $L\geq 2\pi r$ as the analogue of \eqref{eq:main}.
We will include in the \nameref{sec:appendix} a version of \eqref{eq:main} by F. Nazarov for curves in $\R^n$, which is obtained by covering the unit sphere $\S^{n-1}$ with certain slabs, and applying the correlation inequality \cite{royden2014, latala-matlak2017} to their Gaussian volume. 
This approach has implications for covering problems for the sphere by congruent disks \cite{boroczky-wintsche2003}, and yields a new proof of a result of Tikhomirov \cite{tikhomirov2015} (Note \ref{note:tikhomirov}).
There are many natural optimization problems for convex hull of space curves which remain open, including other questions of Zalgaller \cite{zalgaller:1996} which are closely related to well-known problems  of Bellman \cite{bellman:1956,bellman1963,ahks2022} in operations research and search theory \cite{alpern-gal2003,gal2013}; see also \cite{ghomi:lwr, ghomi-wenk2021, nikonorov2022} and references therein.

\section{Minimal Inspection Curves}\label{sec:minimal}
$\R^n$ denotes the $n$-dimensional Euclidean space with  inner product $\langle\cdot,\cdot\rangle$, norm $|\cdot|:=\langle\cdot,\cdot\rangle^{1/2}$, and origin $o$. 
A \emph{curve} is a continuous rectifiable mapping $\gamma\colon[a,b]\to\R^n$ with length $L=L(\gamma)$. We also use $\gamma$ to refer to its image $\gamma([a,b])$. If $\gamma(a)=\gamma(b)$ then we say that $\gamma$ is \emph{closed} and  identify $[a,b]$ with the topological circle $\R/(b-a)$.
Rectifiable curves may be parameterized with constant speed \cite{bbi:book}, which we assume is the case throughout this work.  In particular all curves below are Lipschitz continuous, and thus differentiable almost everywhere, with $|\gamma'|=L/(b-a)$; see \cite[Sec. 2]{ghomi-wenk2021} and references therein for basic facts on rectifiable curves.    We say  $\gamma$ is a \emph{(generalized) inspection curve} provided that $\gamma$ is closed and  its convex hull, $\conv(\gamma)$, contains the unit sphere $\S^2$.  It follows from Arzela-Ascoli theorem that there exists an  inspection curve $\gamma$ whose length achieves the minimum value among all inspection curves \cite[Sec. 2]{ghomi-wenk2021}. Then $\gamma$ will be called a 
\emph{minimal}  inspection curve.  We let  \emph{int}, \emph{cl}, and $\partial$, stand respectively for interior, closure, and boundary.

\begin{lem}\label{lem:segments1}
Let $\gamma\colon\R/L\to\R^3$ be a minimal  inspection curve. Suppose that $\gamma(t)\in \inte(\conv(\gamma))$, for some $t\in\R/L$. Then there exists a connected open set $U\subset\R/L$, with $t\in U$, such that $\gamma$ maps $\cl(U)$ injectively to a  line segment with end points on $\partial\conv(\gamma)$. In particular, $\gamma(t)=o$ for at most finitely many $t\in\R/L$.
\end{lem}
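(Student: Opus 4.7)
My plan is a standard chord-replacement argument driven by length-minimality. Fix $t$ with $\gamma(t) \in \inte(\conv(\gamma))$ and let $U \subset \R/L$ be the connected component of $\gamma^{-1}(\inte(\conv(\gamma)))$ containing $t$. Since the extreme points of $\conv(\gamma)$ all lie on $\gamma \cap \partial\conv(\gamma)$, they lie outside $\gamma(U)$; in particular $U \neq \R/L$, so $U$ is an open arc $(a,b)$, and by maximality $\gamma(a),\gamma(b)\in\partial\conv(\gamma)$.

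The key step is to compare $\gamma$ with the curve $\tilde\gamma$ obtained by replacing $\gamma|_{[a,b]}$ with the straight segment $\sigma$ from $\gamma(a)$ to $\gamma(b)$. Since every extreme point of $\conv(\gamma)$ lies in $\gamma\bigl((\R/L)\setminus U\bigr)$, the Krein--Milman theorem gives
\[
\conv\bigl(\gamma|_{(\R/L)\setminus U}\bigr)=\conv(\gamma)\supset\S^2,
\]
so $\conv(\tilde\gamma)\supset\S^2$ as well, i.e.\ $\tilde\gamma$ is an inspection curve. Its length satisfies
\[
L(\tilde\gamma)=L\bigl(\gamma|_{(\R/L)\setminus U}\bigr)+|\gamma(b)-\gamma(a)|\leq L(\gamma),
\]
with equality iff $\gamma|_{[a,b]}$ parameterizes $\sigma$ at constant speed, hence injectively. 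Minimality of $\gamma$ forces equality, yielding the stated description of $\gamma|_{\cl(U)}$; note also that $\gamma(a)\neq\gamma(b)$, for otherwise the replacement would strictly shorten $\gamma$.

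For the last assertion, suppose $\gamma(t)=o$ for infinitely many $t$. Compactness of $\R/L$ supplies an accumulation point $t_0$, and continuity gives $\gamma(t_0)=o$. Because $\conv(\gamma)\supset\S^2$ implies it contains the whole closed unit ball, we have $o\in\inte(\conv(\gamma))$, so the first part of the lemma provides a neighborhood of $t_0$ on which $\gamma$ is a constant-speed injective parameterization of a line segment. Such a parameterization passes through $o$ at most once, contradicting the accumulation. The one delicate step is the Krein--Milman argument ensuring that the chord replacement preserves the containment $\conv(\tilde\gamma)\supset\S^2$; once this is in hand, everything else is an immediate application of minimality and the triangle inequality.
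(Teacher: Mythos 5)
Your proof is correct and follows essentially the same chord-replacement-by-minimality strategy as the paper. You supply more detail in two places where the paper is terse, and one of those details amounts to a genuinely different argument. For the claim that replacing $\gamma|_{\cl(U)}$ by a chord preserves the convex hull, you invoke Krein--Milman to show $\conv(\gamma|_{(\R/L)\setminus U})=\conv(\gamma)$; the paper simply asserts that the replacement preserves the points of $\gamma$ on $\partial\conv(\gamma)$ and hence $\conv(\gamma)$ --- your version makes precise what is left implicit there, and both are equivalent. For the finiteness of $\gamma^{-1}(o)$, however, the routes differ: the paper argues quantitatively that any component $U$ through $o$ must satisfy $L(\gamma|_U)\geq 2$ (since the segment contains a diameter of $\S^2$), so rectifiability caps the number of such components; you instead argue by compactness that an accumulation point $t_0$ of $\gamma^{-1}(o)$ would force infinitely many preimages of $o$ inside a neighborhood on which the first part guarantees injectivity. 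Both are valid; the paper's version is sharper (it bounds the count by $L/2$), while yours is a softer topological argument requiring no use of the specific geometry of $\S^2$.
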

\begin{proof}
Let $U$ be the component of $\gamma^{-1}(\inte(\conv(\gamma)))$ which contains $t$. If $\gamma|_{\cl(U)}$ does not trace a line segment, we may shorten $\gamma$ by replacing $\gamma(\cl(U))$ with the line segment connecting the end points of $\gamma(\cl(U))$. But this operation preserves $\conv(\gamma)$, as it preserves the points of $\gamma$ on $\partial\conv(\gamma)$. Hence we obtain an inspection curve shorter than $\gamma$,  which is impossible. If $\gamma(t)=o$, then $L(\gamma|_U)\geq 2$, since $\gamma(U)$ contains a diameter of $\S^2$. So there can be only finitely many such points, since $\gamma$ is rectifiable. 
\end{proof}

We say that $t$ is a \emph{regular} point of a curve $\gamma$ provided that $\gamma$ is differentiable at $t$ and $\gamma'(t)\neq 0$. Then the \emph{tangent line} of $\gamma$ at $t$ is well defined. Since we assume that curves are parameterized with constant speed, they are regular almost everywhere. Furthermore, by Lemma \ref{lem:segments1}, all points $t\in\R/L$ with $\gamma(t)\in\inte(\conv(\gamma))$ of a minimal inspection curve $\gamma$ are regular.

\begin{lem}\label{lem:segments2}
Let $\gamma\colon\R/L\to\R^3$ be a minimal  inspection curve,  $t\in\R/L$ be a regular point of $\gamma$, and $\ell$ be the tangent line of $\gamma$ at $t$. Suppose that $\ell$ intersects $\inte(\conv(\gamma))$. Then there exists an open interval $U\subset\R/L$,  with $t\in U$, which is mapped injectively by $\gamma$ into $\ell\cap\inte(\conv(\gamma))$.
\end{lem}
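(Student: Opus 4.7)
The plan is to reduce this statement to Lemma~\ref{lem:segments1}. If I can show $\gamma(t)\in\inte(\conv(\gamma))$, then Lemma~\ref{lem:segments1} produces an open neighborhood $U$ of $t$ on which $\gamma$ injectively traces a line segment with endpoints on $\partial\conv(\gamma)$. Since $\gamma$ is differentiable at $t$ with nonzero velocity and $\gamma|_{\cl(U)}$ is a straight line segment through $\gamma(t)$, this segment must be parallel to $\gamma'(t)$ and hence lie on $\ell$. The inclusion $\gamma(U)\subset\inte(\conv(\gamma))$ is built into the construction of $U$ in Lemma~\ref{lem:segments1}, so $U$ will serve as the desired interval.

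The real work is therefore to rule out the possibility that $\gamma(t)\in\partial\conv(\gamma)$. The idea is a transverse-crossing argument using a supporting hyperplane. Assume for contradiction $\gamma(t)\in\partial\conv(\gamma)$, and choose a supporting hyperplane $H$ of $\conv(\gamma)$ at $\gamma(t)$ with outward unit normal $n$, so that $\conv(\gamma)\subset H^{-}:=\{x:\langle x-\gamma(t),n\rangle\leq 0\}$ while $\inte(\conv(\gamma))\subset\{x:\langle x-\gamma(t),n\rangle<0\}$. Writing $v:=\gamma'(t)/|\gamma'(t)|$, the hypothesis that $\ell$ meets $\inte(\conv(\gamma))$ yields a scalar $c\ne 0$ with $\gamma(t)+cv\in\inte(\conv(\gamma))$; this forces $c\langle v,n\rangle<0$, and in particular $\langle v,n\rangle\ne 0$.

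To close the contradiction I invoke differentiability at $t$. Assuming without loss of generality $c>0$ and hence $\langle v,n\rangle<0$ (the case $c<0$ is symmetric, using $\gamma(t+\epsilon)$ in place of $\gamma(t-\epsilon)$), the first-order expansion $\gamma(t-\epsilon)-\gamma(t)=-|\gamma'(t)|\epsilon v+o(\epsilon)$ gives
\[
\langle \gamma(t-\epsilon)-\gamma(t),\,n\rangle \;=\; |\gamma'(t)|\,|\langle v,n\rangle|\,\epsilon \;+\; o(\epsilon),
\]
which is strictly positive for all sufficiently small $\epsilon>0$. But this contradicts $\gamma(t-\epsilon)\in\conv(\gamma)\subset H^{-}$. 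Hence $\gamma(t)\in\inte(\conv(\gamma))$, and the reduction in the first paragraph completes the proof.

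I expect the main conceptual obstacle to be spotting this transverse-crossing contradiction: once one notices that $\ell$ leaving the supporting half-space on one side forces $\gamma$ itself to leave on the opposite side to first order, everything else is routine first-order calculus together with the cited lemma. Note that minimality of $\gamma$ enters only indirectly, through the appeal to Lemma~\ref{lem:segments1}; the contradiction itself uses only differentiability at a regular point, convexity of the supporting half-space, and the trivial fact that $\gamma\subset\conv(\gamma)$.
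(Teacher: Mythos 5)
Your proposal is correct and follows essentially the same two-step strategy as the paper: rule out $\gamma(t)\in\partial\conv(\gamma)$ by observing that transversality of $\ell$ with a supporting hyperplane forces $\gamma$ to exit $\conv(\gamma)$ near $t$, then reduce to Lemma~\ref{lem:segments1}. The paper's phrasing ``either $\gamma'(t)$ or $-\gamma'(t)$ points outside $\conv(\gamma)$'' is exactly the content of your supporting-hyperplane and first-order-expansion argument, just stated more tersely.
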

\begin{proof}
If $\gamma(t)\in\partial\conv(\gamma)$, then either $\gamma'(t)$ or $-\gamma'(t)$ points outside $\conv(\gamma)$.
Hence, for some $s$ close to $t$, $\gamma(s)$ lies outside $\conv(\gamma)$, which is impossible. So $\gamma(t)\in\inte(\conv(\gamma))$, in which case Lemma \ref{lem:segments1} completes the proof.
\end{proof}

Combining the last two observations we obtain: 

\begin{prop}\label{prop:o}
Let $\gamma\colon\R/L\to\R^3$ be a minimal inspection curve. Then there exists an open set $U\subset\R/L$ such that  tangent lines of $\gamma$ on $U$ do not pass through $o$. Furthermore if  $U\neq\R/L$, then $\R/L\setminus U$ is the disjoint union of a finite number of closed intervals  each mapped by $\gamma$ into a line segment which passes through $o$ and ends on $\partial\conv(\gamma)$. 
\end{prop}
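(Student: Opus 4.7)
The plan is to decompose $\R/L$ using Lemma \ref{lem:segments1}. Since $\S^2 \subset \conv(\gamma)$, we have $o \in \inte \conv(\gamma)$. Set $W := \gamma^{-1}(\inte \conv(\gamma))$, which is open in $\R/L$. By Lemma \ref{lem:segments1} each component of $W$ is an open interval mapped injectively by $\gamma$ to a line segment with endpoints on $\partial \conv(\gamma)$. I split these components into $W_1$ (those mapped onto a segment whose supporting line passes through $o$) and $W_2$ (the rest), and then set $U := \R/L \setminus \cl(W_1)$, which is open.

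Next I would verify that for every $t \in U$ at which $\gamma$ admits a tangent line $\ell$, one has $o \notin \ell$. If $t \in U \cap W_2$, then $\gamma$ locally traces a segment and $\ell$ equals the supporting line of that segment, which avoids $o$ by definition of $W_2$. Otherwise, if $o \in \ell$, then $\ell$ meets $\inte \conv(\gamma)$ at $o$, so Lemma \ref{lem:segments2} furnishes a neighborhood of $t$ mapped into $\ell \cap \inte \conv(\gamma) \subset W$; this neighborhood lies in a single component of $W$ whose image is contained in $\ell$, hence in $W_1$. Thus $t \in \cl(W_1)$, contradicting $t \in U$.

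It remains to analyze $\R/L \setminus U = \cl(W_1)$. For each component $I$ of $W_1$, $\gamma(\overline{I})$ is a segment on a line $\ell$ through $o$ with endpoints on $\partial \conv(\gamma)$. Because $o \in \inte \conv(\gamma)$, the line $\ell$ meets $\partial \conv(\gamma)$ at exactly two points, and these must be the endpoints of $\gamma(\overline{I})$; hence $\gamma(\overline{I})$ equals the whole chord $\ell \cap \conv(\gamma)$ and passes through $o$. Since $\S^2 \subset \conv(\gamma)$, this chord contains a diameter of $\S^2$, so its length is at least $2$. Under the unit-speed parameterization, each component of $W_1$ has parameter length at least $2$, so there are at most $L/2$ components. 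Thus $\cl(W_1)$ is a finite union of closed intervals, each mapped by $\gamma$ into a segment through $o$ with endpoints on $\partial \conv(\gamma)$.

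The step that demands the most care is the separation of cases in the tangent-line verification, particularly ruling out a tangent through $o$ at a regular point with $\gamma(t) \in \partial \conv(\gamma)$. This is settled by the Lemma \ref{lem:segments2} argument, which would otherwise force $\gamma(t) \in \inte\conv(\gamma)$; the remaining bookkeeping (that components of $W_1$ and $W_2$ have disjoint closures within $W$, and that passing to $\cl(W_1)$ does not absorb points of $W_2$) is routine since components of an open set can only share boundary points outside $W$.
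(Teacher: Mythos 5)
Your proof is correct and follows essentially the same approach as the paper's: both single out the ``bad set'' consisting of parameter intervals that $\gamma$ maps onto chords through $o$ with endpoints on $\partial\conv(\gamma)$, observe via Lemma~\ref{lem:segments1} (each such chord contains a diameter of $\S^2$, so costs length $\geq 2$) that there are only finitely many of them, take $U$ to be the complement, and invoke Lemma~\ref{lem:segments2} to rule out tangent lines through $o$ on $U$. Your version is somewhat more explicit --- you build the bad set from the components of $\gamma^{-1}(\inte\conv(\gamma))$ and spell out the tangent-line case analysis --- whereas the paper phrases the same construction directly in terms of closed intervals $I$ with $\gamma(I)$ a chord through $o$, but the underlying argument is the same.
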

\begin{proof}
Let $X$ be the union of all closed intervals $I\subset \R/L$ such that $\gamma(I)$ is a line segment which passes through $o$ and ends on $\partial\conv(\gamma)$. By Lemma \ref{lem:segments1}, there are at most finitely many such intervals. Thus $X$ is closed. Let $U:=\R/L\setminus X$. By Lemma \ref{lem:segments2}, no tangent line of $\gamma$ at a regular point of $U$ may pass through $o$, which completes the proof.
\end{proof}

\section{Unfolding}\label{sec:unfolding}

Let $\gamma\colon \R/L\to\R^3$ be a minimal  inspection curve. We will always assume that $0$ is a local minimum point of $|\gamma|$.
By Lemma \ref{lem:segments1}, $\gamma$ passes through $o$ at most finitely many times which, if they exist,  
will be denoted by $0=:t_0, \dots, t_m:=L$. Then the projection $\ol\gamma\colon\R/L\to\S^2$, given by $\ol\gamma:=\gamma/|\gamma|$ is well defined on $\R/L\setminus \{t_k\}$. Furthermore since, by Proposition \ref{prop:o}, $\gamma$ traces line segments near $t_k$, $\ol\gamma$ is Lipschitz on each interval $(t_{k-1},t_k)$.
Thus $\ol\gamma$ is differentiable almost everywhere on $\R/L$. Consequently,  the  arclength function 
$$
\theta(t):=\int_0^t|\ol\gamma'(s)|ds
$$
is well defined on $[0,L]$  ($\theta$ measures the ``cone angle" \cite{cks2002} or ``vision angle" \cite{choe-gulliver1992} of $\gamma$ from the point of view of $o$).
The \emph{unfolding}   of $\gamma$ is  the planar curve $\tilde\gamma\colon[0,L]\to\R^2$ defined as 
$$
\tilde\gamma(t):=|\gamma(t)|e^{i\big(\theta(t)+(k-1)\pi\big)},\quad\text{for}\quad t\in[t_{k-1},t_k].
$$
Note that $|\gamma|=|\tilde\gamma|$, and whenever $\gamma$ passes through $o$, then $\tilde\gamma$ will pass through $o$ as well on a line segment.
As in \cite{ghomi-wenk2021},  we may also compute that
\be\label{eq:theta-prime}
|\tilde\gamma'|=\big||\gamma|'+i|\gamma|\theta'\big|,\quad\quad\text{and}\quad\quad 
\theta'=|\ol\gamma'|=\frac{1}{|\gamma|^2}\sqrt{|\gamma|^2|\gamma'|^2-\langle\gamma,\gamma'\rangle^2},
\ee
almost everywhere. It follows that, for almost all $t\in[0,L]$,
$
|\tilde\gamma'|=|\gamma'|=1.
$
So $\tilde\gamma$ is parameterized by arclength, and $L(\gamma)=L(\tilde\gamma)$. 
Hence, by \cite[Cor. 3.2]{ghomi-wenk2021}, 
$
E(\gamma)=E(\tilde\gamma)
$
since points of $\gamma$ with $|\gamma|\leq 1$ make no contribution to $E(\gamma)$.
Furthermore, the angles $\alpha:=\angle(\gamma, \gamma')$ and  $\tilde\alpha:=\angle(\tilde\gamma, \tilde\gamma')$ are defined almost everywhere, and
\begin{equation}\label{eq:alpha}
\alpha
=\cos^{-1}\left(|\gamma|'\right)
=\cos^{-1}\left(|\tilde\gamma|'\right)
=\tilde\alpha.
\end{equation}

\begin{lem}\label{lem:1-1}
Let $\gamma\colon\R/L\to\R^3$ be a minimal inspection curve. Then
$\tilde\gamma$ is locally one-to-one.
\end{lem}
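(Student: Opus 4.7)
The plan is to fix $t_0\in\R/L$ and produce a neighborhood on which $\tilde\gamma$ is injective, splitting into the case where $\gamma(t_0)\neq o$ and the case $t_0=t_k$ for some $k$. The finiteness of the set $\{t_k\}$ (Lemma \ref{lem:segments1}) lets us shrink freely, and Proposition \ref{prop:o} supplies the crucial structural fact that whenever the tangent line of $\gamma$ passes through $o$, $\gamma$ itself lies on a line segment through $o$ on which it is injective.

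First suppose $t_0$ is distinct from every $t_k$. Choose an open neighborhood $U\ni t_0$ short enough that $U\cap\{t_k\}=\emptyset$ and the variation of $\theta$ on $U$ is less than $\pi$. On $U$ the index $k$ is fixed, so
$$
\tilde\gamma(t)=|\gamma(t)|\,e^{i(\theta(t)+(k-1)\pi)}.
$$
If $\tilde\gamma(s_1)=\tilde\gamma(s_2)$ for $s_1,s_2\in U$, then matching moduli and arguments yields $|\gamma(s_1)|=|\gamma(s_2)|$ and $\theta(s_1)=\theta(s_2)$. The latter forces $\int_{s_1}^{s_2}|\ol\gamma'|=0$, so $\ol\gamma$ is constant on $[s_1,s_2]$; equivalently, $\gamma$ lies on a ray emanating from $o$. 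At regular points of this interval the tangent line therefore contains $o$, so by Proposition \ref{prop:o} the interval $[s_1,s_2]$ is contained in one of the finitely many closed intervals on which $\gamma$ traces a line segment through $o$ injectively (Lemma \ref{lem:segments1}). Since $\gamma$ is restricted to a single ray from $o$, injectivity of $\gamma$ on $[s_1,s_2]$ reduces to injectivity of $|\gamma|$, and the equality $|\gamma(s_1)|=|\gamma(s_2)|$ yields $s_1=s_2$.

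Next suppose $t_0=t_k$. By Lemma \ref{lem:segments1}, there is a neighborhood $V$ of $t_k$ which $\gamma$ maps injectively into a line segment through $o$; shrink $V$ so that it contains no other $t_j$. Parameterizing by arclength and using $|\gamma'|=1$, there is a unit vector $v\in\S^2$ with $\gamma(t)=(t-t_k)v$ on $V$. On both sides of $t_k$, $\ol\gamma$ is the constant $\pm v$, so $\theta$ is constant with a common value $\theta_k$ across $t_k$. Thus for $t\in V\cap[t_{k-1},t_k]$,
$$
\tilde\gamma(t)=(t_k-t)\,e^{i(\theta_k+(k-1)\pi)},
$$
while for $t\in V\cap[t_k,t_{k+1}]$ the extra factor $e^{i\pi}$ in the rotation gives
$$
\tilde\gamma(t)=(t-t_k)\,e^{i(\theta_k+k\pi)}=(t_k-t)\,e^{i(\theta_k+(k-1)\pi)}.
$$
Hence $\tilde\gamma|_V$ is the linear map $t\mapsto (t_k-t)e^{i(\theta_k+(k-1)\pi)}$, which is obviously injective.

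The main obstacle is exactly the interface case $t_0=t_k$, where a priori $\tilde\gamma$ could fold back on itself like a ``V'' at $o$; the $(k-1)\pi$ phase jump built into the definition of unfolding is precisely what turns this potential fold into a straight line through $o$, and the verification above is the heart of the argument. Everywhere else local injectivity reduces cleanly to the monotonicity of $|\gamma|$ along a ray, guaranteed by Proposition \ref{prop:o} and Lemma \ref{lem:segments1}.
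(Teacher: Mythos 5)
Your proof is correct, and it is built on the same lemmas (Lemma \ref{lem:segments1}, Lemma \ref{lem:segments2}, Proposition \ref{prop:o}) as the paper's, but the case analysis is organized in a genuinely different way. The paper partitions $\R/L$ into the open set $U$ of Proposition \ref{prop:o}, the interior of its complement, and $\partial U$, and treats each separately: on $U$ it shows $\theta'>0$ a.e. via the Cauchy--Schwarz inequality and linear independence of $\gamma,\gamma'$, so $\theta$ is strictly monotone and $\tilde\gamma$ is locally star-shaped about $o$; on the complement $\tilde\gamma$ traces a line through $o$; and at $\partial U$ one has star-shapedness on one side and a line on the other. You instead split on whether $\gamma(t_0)=o$. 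When $\gamma(t_0)\neq o$, you argue by contrapositive: a failure of injectivity forces $\theta(s_1)=\theta(s_2)$, hence $\theta$ constant and $\ol\gamma$ constant on $[s_1,s_2]$, so by Proposition \ref{prop:o} the whole subinterval sits inside one of the line-segment intervals of the complement, where Lemma \ref{lem:segments1} gives injectivity of $\gamma$ and monotonicity of $|\gamma|$. This single argument quietly covers $U$, $\inte(\R/L\setminus U)$, and $\partial U$ at once, avoiding both the Cauchy--Schwarz computation and the separate treatment of boundary points; the tradeoff is that you must carefully control the phase index $k$ and the $\theta$-variation, and you give an explicit affine computation of $\tilde\gamma$ at the $t_k$, whereas the paper handles those points more abstractly via the line-segment structure. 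Both are complete proofs; yours is somewhat more elementary and unified in Case 1, while the paper's is more structural and closer to how the star-shapedness is used later (e.g., in Lemma \ref{lem:loc-convex}).

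One small point worth making explicit: when you conclude from $\theta(s_1)=\theta(s_2)$ that $\ol\gamma$ is constant on $[s_1,s_2]$, you are using that $\ol\gamma$ is absolutely continuous there, which holds because $[s_1,s_2]$ avoids the $t_k$ and $\ol\gamma$ is Lipschitz on each $(t_{k-1},t_k)$, as the paper notes in Section \ref{sec:unfolding}. This is implicit in your argument and should be stated, but it is not a gap.
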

\begin{proof}
Let $U$ be as in Proposition  \ref{prop:o}. Then $\gamma$ and $\gamma'$ are linearly independent at all regular points of $U$. So \eqref{eq:theta-prime} shows that $\theta'>0$ almost everywhere on $U$, via Cauchy-Schwarz inequality. Hence $\theta$ is strictly increasing on $U$, which yields that $\tilde\gamma$ is star-shaped with respect to $o$ in a neighborhood of each point of  $U$. Since, by Proposition  \ref{prop:o}, $\gamma$ traces a line segment on each component of $\R/L\setminus U$, $|\gamma|$ is strictly monotone on each of these components. Hence $\tilde\gamma$ is one-to-one on each component of $\R/L\setminus U$, since $|\tilde\gamma|=|\gamma|$. Finally,  $\tilde\gamma$ is one-to-one in a neighborhood of each point of $\partial U$, since $\tilde\gamma$ is locally star-shaped on $U$ and it maps each component of $\R/L\setminus U$ to a line passing through $o$.
\end{proof}

A planar curve $\gamma\colon[a,b]\to\R^2$ is \emph{locally convex} provided that it is locally one-to-one and each point $t\in [a,b]$ has a neighborhood $U\subset[a,b]$ such that $\gamma(U)$ lies on the boundary of a convex set. A \emph{side} of a  line $\ell\subset\R^2$ is one of the two closed half spaces determined by $\ell$. A \emph{local supporting line} $\ell$ for $\gamma$ at $t$ is a line passing through $\gamma(t)$ with respect to which  $\gamma(U)$ lies on one side. If $\gamma(U)$ lies on a side of $\ell$ which contains $o$, then we say that $\ell$ lies \emph{above} $\gamma$. 
Finally, if $\gamma$ is locally convex and through each point of it there passes a local support line which lies above $\gamma$, then  we say that 
$\gamma$ is locally convex \emph{with respect to $o$}. Note that if $\gamma$ is locally convex with respect to $o$ and passes through $o$, then $\gamma$ must trace a line segment near $o$. 

\begin{lem}\label{lem:loc-convex}
Let $\gamma\colon \R/L\to\R^3$ be a minimal inspection curve. Then
$\tilde\gamma$  is locally convex with respect to $o$.
\end{lem}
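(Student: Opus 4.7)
I would argue by contradiction using the length-minimality of $\gamma$ together with a local cut-and-paste construction.

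First, I would dispose of the easy cases. If $t_0$ lies in one of the finitely many intervals comprising $\R/L\setminus U$ from Proposition \ref{prop:o}, then $\gamma$ (hence $\tilde\gamma$) traces a line segment through $o$ near $t_0$; such a segment is trivially locally convex with respect to $o$, since the line containing it is itself a local support line that contains the curve and therefore lies above it. The junction case $t_0\in\partial U$ can be handled by combining this segment structure with the star-shaped-with-respect-to-$o$ behavior on the $U$-side. So the substantive case is $t_0\in\inte(U)$ with $\gamma(t_0)\neq o$, where the proof of Lemma \ref{lem:1-1} shows that $\tilde\gamma$ is locally star-shaped with respect to $o$.

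Suppose for contradiction that $\tilde\gamma$ fails to be locally convex with respect to $o$ at such a $t_0$. Then for $t_1<t_0<t_2$ arbitrarily close to $t_0$, the subarc $\tilde\gamma([t_1,t_2])$ lies strictly on the far side from $o$ of the planar chord $C$ joining $\tilde\gamma(t_1)$ to $\tilde\gamma(t_2)$. I would construct a competitor $\gamma^*$ by folding $C$ back into three dimensions: letting $\Pi$ be the plane through $o$, $\gamma(t_1)$, $\gamma(t_2)$, I identify the planar sector containing $C$ isometrically with a half-plane of $\Pi$ so that $\tilde\gamma(t_i)\mapsto\gamma(t_i)$, and take $\gamma^*$ as the image of $C$ under this identification. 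Because the three-dimensional spherical angle between $\ol\gamma(t_1)$ and $\ol\gamma(t_2)$ is at most the cumulative cone angle $\theta(t_2)-\theta(t_1)$, the fold is nonexpansive, so $L(\gamma^*)\leq L(C)<L(\tilde\gamma|_{[t_1,t_2]})=L(\gamma|_{[t_1,t_2]})$. Splicing $\gamma^*$ into $\gamma$ in place of $\gamma|_{[t_1,t_2]}$ then produces a strictly shorter closed curve.

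The principal obstacle, where I expect the technical crux to lie, is to verify that this shortened curve remains an inspection curve, i.e., that its convex hull still contains $\S^2$. By Lemma \ref{lem:segments1}, if $\gamma(t_0)\in\inte(\conv(\gamma))$ then $\gamma$ already traces a line segment near $t_0$, contradicting failure of local convexity, so one may assume $\gamma(t_0)\in\partial\conv(\gamma)$. The modification then perturbs only a small cap of $\partial\conv(\gamma)$ near $\gamma(t_0)$, affecting the support function of the convex hull only in a narrow cone of directions near the outer normal at $\gamma(t_0)$. A continuity argument, exploiting that $\gamma^*$ agrees with $\gamma$ at the endpoints $t_1,t_2$ and that the defect of local convexity gives a quantitative lower bound on the shortening $L(\gamma|_{[t_1,t_2]})-L(\gamma^*)$, should show that for $|t_2-t_1|$ small enough the support function remains $\geq 1$ in every direction, delivering the desired contradiction with the minimality of $\gamma$ and completing the proof.
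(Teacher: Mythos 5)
Your reduction to the interesting case is correct, and the overall strategy—contradict minimality by splicing in a shorter arc obtained from a chord in the unfolding—is indeed the mechanism that underlies the paper's argument. But the crucial step, verifying that the spliced curve is still an inspection curve, is not established by what you write, and the ``continuity argument'' you propose does not work as stated. The difficulty is that shortening a minimal curve can only shrink its convex hull, so there is no a priori slack: the original curve may meet each tangent plane of $\S^2$ exactly tangentially, in which case an arbitrarily small perturbation near $\gamma(t_0)$ could push the support function below $1$ in some direction. A quantitative bound on the amount of shortening gives you nothing in this direction—length saved is unrelated to support function gained—so ``for $|t_2-t_1|$ small enough the support function remains $\geq 1$'' is not a consequence of anything you have shown. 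What is actually needed is a positive argument that the modified curve still crosses every tangent plane of $\S^2$ that the original one did; in the paper this is delegated to \cite[Prop.~4.3]{ghomi-wenk2021}, where the verification is done by tracking intersections with tangent lines of $\S^1$ under the unfolding (a horizon-type argument), not by continuity.

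A secondary, more minor slip: you describe the map from the unfolding plane to the plane $\Pi$ through $o$, $\gamma(t_1)$, $\gamma(t_2)$ as an ``isometric identification'' that sends $\tilde\gamma(t_i)\mapsto\gamma(t_i)$. It cannot be both. The planar sector has cone angle $\theta(t_2)-\theta(t_1)$, which in general exceeds the angle $\angle\bigl(\gamma(t_1),o,\gamma(t_2)\bigr)$; matching the endpoints forces an angular compression, which is $1$-Lipschitz but not isometric. You later say ``the fold is nonexpansive,'' which is the correct statement, but the earlier phrasing should be fixed. The paper's own proof avoids constructing the competitor explicitly: it forms the simple closed curve $\tilde\gamma(V)$ together with two radial segments to $o$ and quotes \cite[Prop.~4.3]{ghomi-wenk2021} to conclude it bounds a convex set; your proposal is, in effect, an attempt to reprove that cited proposition, and the gap above is precisely the content you would need to supply.
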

\begin{proof}
Let $U$ be as in Proposition  \ref{prop:o}, and $t\in U$. By Lemma \ref{lem:1-1}, there exists a neighborhood $V$ of $t$ in $U$ on which $\tilde\gamma$ is one-to-one. Furthermore, $\tilde\gamma(V)$ is star-shaped with respect to $o$. So connecting the end points of $\tilde\gamma(V)$ to $o$ by line segments yields a simple closed curve. It is shown in the proof of \cite[Prop. 4.3]{ghomi-wenk2021} that this curve bounds a convex set, due to minimality of $\gamma$. Thus $\tilde\gamma$ is locally convex with respect to $o$ on $U$. Next suppose that $t\in\partial U$, and let $V$ be a small neighborhood of $t$ in $\cl(U)$. By Proposition \ref{prop:o},  $\tilde\gamma$ connects one end point of $\tilde\gamma(V)$ to $o$ by tracing a line segment. Connect the other end point of $\tilde\gamma(V)$ to $o$ by another line segment. Then the resulting simple closed curve again bounds a convex set by the argument in the proof of \cite[Prop. 4.3]{ghomi-wenk2021}. So $\tilde\gamma$ is locally convex with respect to $o$ on $\cl(U)$. Finally, $\tilde\gamma$ is locally convex with respect to $o$ on the complement of $\cl(U)$, since these regions are mapped to line segments, by Proposition  \ref{prop:o}.
\end{proof}

\section{Spiral Decomposition}\label{sec:spirals}
If $\gamma\colon[a,b]\to\R^2$ is a locally convex curve, parameterized with constant speed, then its one sided derivatives, $\gamma'_{\pm}$, are well-defined everywhere and are nonvanishing \cite[Lem. 5.1]{ghomi-wenk-arXiv2020}. Set $\gamma'(a):=\gamma'_+(a)$.
We say that $\gamma:[a,b] \to \R^2$ is a \emph{(generalized) spiral} provided that (i) $\gamma$ is locally convex with respect to $o$, (ii) $|\gamma|$ is nondecreasing, and (iii) $\langle\gamma(a),\gamma'(a)\rangle=0$. A spiral is  called \emph{strict}  if $|\gamma|$ is increasing. A \emph{spiral decomposition} of a curve $\gamma\colon[a,b]\to\R^2$ is a collection $U_i$ of mutually disjoint open subsets of $[a,b]$ such that (i)  $\gamma|_{\cl(U_i)}$ is a strict spiral, after switching the direction of $\gamma|_{\cl(U_i)}$ if necessary, and (ii) $|\gamma|'=0$ almost everywhere on $[a,b]\setminus \cup_i \cl(U_i)$. 

\begin{lem}\label{lem:GeneralizedDecomposition}
Let $\gamma\colon\R/L\to\R^3$ be a  minimal  inspection curve.  Then  $\tilde\gamma$ admits a spiral decomposition.
\end{lem}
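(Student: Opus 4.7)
The plan is to extract the spiral decomposition directly from the monotonicity structure of the distance function $f(t):=|\tilde\gamma(t)|$. I would set
$$
U := \{\,t\in\R/L : f \text{ is strictly monotone on some open neighborhood of } t\,\},
$$
which is manifestly open in $\R/L$. Let $\{U_i\}$ be the connected components of $U$; each is a maximal open interval on which $f$ is strictly monotone (strict monotonicity persists throughout $U_i$ by continuity and the absence of interior local extrema). After reversing the parameterization of $\tilde\gamma$ on those components where $f$ decreases, I may assume $f$ is strictly increasing on each $U_i$, and I claim that $\{U_i\}$ is the desired spiral decomposition.

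First, I would verify that each $\tilde\gamma|_{\cl(U_i)}$ is a strict spiral. Local convexity with respect to $o$ is inherited from Lemma \ref{lem:loc-convex}, and strict monotonicity of $|\tilde\gamma|$ extends from $U_i$ to $\cl(U_i)=[a_i,b_i]$ by continuity. The critical point is the orthogonality $\langle\tilde\gamma(a_i),\tilde\gamma'_+(a_i)\rangle=0$ at the starting endpoint $a_i$. Since $a_i\notin U$, $f$ is not strictly monotone on any neighborhood of $a_i$; combined with strict increase on $(a_i,b_i)$, this yields $f'_-(a_i)\leq 0\leq f'_+(a_i)$, so $a_i$ is a weak local minimum of $f$. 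Locally near $a_i$ the curve $\tilde\gamma$ therefore lies outside the disk $\{|x|\leq f(a_i)\}$, while by Lemma \ref{lem:loc-convex} it also lies on the $o$-side of some local supporting line. I intend to argue that the intersection of these two geometric constraints, coupled with the need for $f$ to strictly increase immediately to the right of $a_i$, forces $\tilde\gamma'_+(a_i)$ to be tangent to the circle of radius $f(a_i)$ centered at $o$, giving the orthogonality. The degenerate case $\tilde\gamma(a_i)=o$ is immediate from Proposition \ref{prop:o}, since $\tilde\gamma$ then traces a line through $o$ locally.

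Second, I would verify $|\tilde\gamma|'=0$ almost everywhere on $\R/L\setminus\bigcup_i\cl(U_i)$. On this complement no point admits an open neighborhood of strict monotonicity of $f$; combined with the locally convex structure of $\tilde\gamma$ with respect to $o$ (which controls the oscillation of $f$), this forces the Lipschitz derivative $f'$, wherever it exists, to vanish on the complement.

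The main obstacle is the orthogonality condition at $a_i$. The naive geometric picture admits corner configurations in which $\tilde\gamma$ approaches $a_i$ tangentially along a circular arc centered at $o$ and then turns outward abruptly with a forward tangent not perpendicular to the radius; such a configuration is locally convex with respect to $o$ but violates the orthogonality. Excluding these bad corners will require a delicate interplay between local convexity with respect to $o$ and the minimality of $\gamma$, likely through a local shortening argument near the putative corner, to ensure that the admissible forward tangent cone at $a_i$ is tangent to the circle through $\tilde\gamma(a_i)$.
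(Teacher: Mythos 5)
Your decomposition strategy is in the right spirit, but you have correctly diagnosed where it breaks down and have not repaired it, so the proposal as written has a genuine gap precisely at the orthogonality condition.

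The paper avoids your problem by choosing the exceptional set differently. Rather than defining $U$ through strict monotonicity of $f=|\tilde\gamma|$, the paper sets $X$ to be the set of $t$ at which $\tilde\gamma$ admits a \emph{local support line orthogonal to} $\tilde\gamma(t)$ (together with the points where $\tilde\gamma(t)=o$), and takes the components of $[0,L]\setminus X$ as the spiral intervals. This shifts the burden: the orthogonality at the starting endpoint $x$ of a component is now almost automatic, because (i) $X$ is closed (limits of support lines of a convex body are support lines, and local convexity from Lemma~\ref{lem:loc-convex} supplies the local convex body), so $x\in X$; (ii) the orthogonal support line lies above $\tilde\gamma$, so $\tilde\gamma'_+(x)$ points into the half-plane on the $o$-side, giving $\langle\tilde\gamma(x),\tilde\gamma'_+(x)\rangle\le 0$; and (iii) monotonicity of $|\tilde\gamma|$ on the component supplies $\langle\tilde\gamma(x),\tilde\gamma'_+(x)\rangle\ge 0$. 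Together these force equality. By contrast, your definition via strict monotonicity gives you only $f'_+(a_i)\ge 0$ at the left endpoint; local convexity with respect to $o$ plus the local-minimum structure of $f$ at $a_i$ does \emph{not} pin the support line to be tangent to the circle of radius $f(a_i)$ — the support line can, for example, pass through $o$, in which case the ``$o$-side'' constraint is vacuous and the forward tangent is free to have a positive radial component. You noticed exactly this failure mode, but the ``local shortening argument'' you gesture toward is left entirely to the reader; this is where the real work lies, and the paper does not redo it here either — it cites the argument from \cite[Prop.~5.2]{ghomi-wenk2021} to establish that $|\tilde\gamma|'$ keeps a fixed sign on each component of $[0,L]\setminus X$. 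That cited argument uses the minimality of $\gamma$ in an essential way and is the ingredient your proposal is missing.

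Two further remarks. First, your claim that $f'=0$ almost everywhere on the complement of $\bigcup_i\cl(U_i)$ is asserted rather than proved; in the paper's formulation this is transparent because at almost every $t\in X$ with $|\tilde\gamma(t)|>0$ the curve is regular, so the unique local support line is the tangent line, and that being orthogonal to $\tilde\gamma(t)$ is exactly $|\tilde\gamma|'(t)=0$. Second, even granting your definitions, you should verify that your $U$ coincides with (or is contained in) the paper's $[0,L]\setminus X$, since strict monotonicity can hold on a neighborhood even when $f'$ vanishes at isolated points; this is not automatic and is another place where the support-line formulation is cleaner.
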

\begin{proof}
The argument follows the same outline as in \cite[Prop. 5.2]{ghomi-wenk2021}, with minor modifications.
Recall that we assume  $0$ is a local minimum point of $|\gamma|$. If $|\gamma(0)|> 0$, then it follows that $\tilde\alpha(0)=\tilde\alpha(L)=\pi/2$. Otherwise, $|\tilde\gamma(0)|=|\tilde\gamma(L)|=0$, since $|\gamma|=|\tilde\gamma|$. Let $X$ be the set of points $t\in[0,L]$ such that $\tilde\gamma$ has a local support line at $\tilde\gamma(t)$ which is orthogonal to $\tilde\gamma(t)$, or $|\tilde\gamma(t)|=0$. Then $0$, $L\in X$ and $|\tilde\gamma|'=0$ almost everywhere on $X$.
Also note that $X$ is closed, since the limit of any sequence of support lines of a convex body is a support line, and the set of points with $|\tilde\gamma(t)|=0$ is compact. Consequently each  component $U$ of  $[0,L]\setminus X$ is an open subinterval of $[0,L]$. It remains to show that $\tilde\gamma|_{\cl(U)}$ is a spiral. By Lemma \ref{lem:loc-convex}, $\tilde\gamma|_{\cl(U)}$ is locally convex with respect to $o$. Furthermore, as argued in the proof of \cite[Prop. 5.2]{ghomi-wenk2021}, $|\tilde\gamma|'$ is always positive or always negative at differentiable points of $|\tilde\gamma|$ on $U$. So we may suppose that $|\tilde\gamma|$ is increasing on $U$, after switching the direction of $\tilde\gamma|_{\cl(U)}$ if necessary. Finally, let $x\in\partial U$ be the initial point of $\tilde\gamma|_{\cl(U)}$. If $|\tilde\gamma(x)|=0$, then $\tilde\gamma|_{\cl(U)}$ is a spiral. If $|\tilde\gamma(x)|>0$, it follows that $\tilde\gamma(x)$ it orthogonal to $\tilde\gamma'_+(x)$, which again shows that
$\tilde\gamma|_{\cl(U)}$ is a spiral and completes the proof.
 \end{proof}

Let $\S^1$ denote the unit circle in $\R^2$. The last observation quickly yields:

\begin{lem}\label{lem:GeneralizedDecomposition2}
Let $\gamma$, $\tilde\gamma$ be as in Lemma \ref{lem:GeneralizedDecomposition} and $\sigma\colon[a,b]\to\R^2$ be a spiral in the decomposition of $\tilde\gamma$. Let $t\in[a,b]$ be a regular point of both $\sigma$ and $\gamma$, and $\ell$ be the tangent line of $\sigma$ at $t$. Suppose that $\ell$ crosses $\S^1$. Then $\sigma([a,t])$ lies on $\ell$.
\end{lem}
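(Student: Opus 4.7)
My plan is to translate the 2D hypothesis into 3D via the unfolding, invoke Lemmas~\ref{lem:segments1}--\ref{lem:segments2} to obtain a 3D line segment of $\gamma$ through the parameter $s_\tau$ of $t$, and then identify the spiral's initial parameter $a$ with the foot of perpendicular from $o$ to $\ell$. To begin, because $|\tilde\gamma|=|\gamma|$ and $\tilde\alpha=\alpha$ by \eqref{eq:alpha}, the distance from $o$ to the 3D tangent line of $\gamma$ at $s_\tau$ equals $|\sigma(t)|\sin\tilde\alpha(t)=\dist(o,\ell)<1$. So the 3D tangent meets the open unit ball, which lies in $\inte(\conv(\gamma))$ since $\S^2\subset\conv(\gamma)$. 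Lemmas~\ref{lem:segments1} and~\ref{lem:segments2} then produce a maximal open interval $(s_p,s_q)\ni s_\tau$ on which $\gamma$ traces injectively a 3D segment with endpoints on $\partial\conv(\gamma)$, giving $|\gamma(s_p)|,|\gamma(s_q)|\geq 1$. Since a planar curve is determined up to rotation by the profiles of $|\cdot|$ and $\alpha$ along its arclength, $\tilde\gamma|_{[s_p,s_q]}$ is itself a 2D line segment with the same profile; it must lie on $\ell$ because $\ell$ is the 2D tangent at the interior parameter $t$.

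I then analyze the exceptional set $X$ from the proof of Lemma~\ref{lem:GeneralizedDecomposition} inside $[s_p,s_q]$. The function $|\tilde\gamma|$ is convex on this interval (distance from $o$ to a point on a line segment), attaining its minimum $d=\dist(o,\ell)$ at a unique parameter $s_F\in(s_p,s_q)$ where $\tilde\gamma(s_F)\perp\ell$; in the degenerate case $d=0$, the point $s_F$ is where $\tilde\gamma$ passes through $o$. In either case $s_F\in X$. For any other $s\in(s_p,s_q)\setminus\{s_F\}$ the unique local support line of $\tilde\gamma$ at $\tilde\gamma(s)$ is $\ell$, which is not orthogonal to $\tilde\gamma(s)\ne F$, and $|\tilde\gamma(s)|>0$; so $s\notin X$. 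Hence $X\cap(s_p,s_q)=\{s_F\}$.

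Finally, the component of $[0,L]\setminus X$ to which $\sigma$ corresponds contains $s_\tau$, and by the previous paragraph $s_F$ is one of its endpoints. Because $|\tilde\gamma|$ is monotone on this component (a property guaranteed by the decomposition) and equals $d=\min_{[s_p,s_q]}|\tilde\gamma|$ at $s_F$, it follows that $s_F$ is where $|\tilde\gamma|$ attains its minimum on the whole component. After the direction reversal built into the spiral decomposition, $a$ then corresponds to $s_F$, so $\sigma(a)=\tilde\gamma(s_F)$. Thus $\sigma|_{[a,t]}$ reparameterizes $\tilde\gamma$ on the 3D arclength interval $[\min(s_F,s_\tau),\max(s_F,s_\tau)]\subset[s_p,s_q]$, whose image lies on $\ell$; therefore $\sigma([a,t])\subset\ell$. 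I expect the main delicate point to be the identification $X\cap(s_p,s_q)=\{s_F\}$, which rests on the simple geometric fact that along a chord of a line, the radial direction from $o$ is perpendicular to the line only at its foot of perpendicular.
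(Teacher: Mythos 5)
Your proof is correct and fills in exactly the details that the paper's own (extremely terse) proof leaves implicit: the paper only records that $\ol\ell$ crosses $\S^2$ by \eqref{eq:alpha} and then cites Lemma~\ref{lem:segments2}, leaving the reader to trace the resulting 3D chord back through the unfolding and the spiral decomposition. You carry this out carefully: the chord $\gamma([s_p,s_q])$ unfolds to a 2D segment on $\ell$ (since $|\tilde\gamma|=|\gamma|$ and $\tilde\alpha=\alpha$), the set $X$ from Lemma~\ref{lem:GeneralizedDecomposition} meets $(s_p,s_q)$ only at the foot of perpendicular $s_F$, and this forces $s_F$ to be the endpoint of the component $U_i$ corresponding to $a$, so $[a,t]$ sits inside $[s_p,s_q]$ and hence on $\ell$. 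The one point you assert without justification is that $s_F\in(s_p,s_q)$; this does hold, because by Lemma~\ref{lem:segments1} the chord $\gamma([s_p,s_q])$ is the full intersection $\ol\ell\cap\conv(\gamma)$, which contains $\ol\ell\cap B^3\neq\emptyset$, so the segment reaches distance $<1$ from $o$ while its endpoints have $|\gamma|\geq1$. With that observation supplied, the argument is complete and follows the same route as the paper, just made explicit.
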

\begin{proof}
Let $\ol\ell$ be the tangent line of $\gamma$ at $t$. If $\ell$ crosses $\S^1$, then $\ol\ell$ crosses $\S^2$, by \eqref{eq:alpha}. In particular, $\ol\ell$ intersects the interior of $\conv(\gamma)$. Then Lemma \ref{lem:segments2} completes the proof.
\end{proof}

The key point in the proof of Theorem \ref{thm:main} is:

\begin{prop}\label{prop:Esigma}
Let $\sigma\colon[a,b]\to\R^2$ be a spiral in the unfolding of a minimal inspection curve.  Then $E(\sigma)\leq 2$. Furthermore, if $|\sigma(a)|<1$, then $E(\sigma)<2$.
\end{prop}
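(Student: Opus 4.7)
My plan is to split the argument based on $|\sigma(a)|$. When $|\sigma(a)|\geq 1$, monotonicity of $|\sigma|$ gives $|\sigma(t)|\geq 1$ for all $t$, and the bound $E(\sigma)\leq 2$ is precisely the analogous spiral estimate established in \cite{ghomi-wenk2021} under the hypothesis $|\gamma|\geq 1$. The substantive case is $|\sigma(a)|<1$, where I need to prove the strict inequality $E(\sigma)<2$.

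In that case, I would first pin down the structure of $\sigma$ near $a$ using Lemma \ref{lem:GeneralizedDecomposition2}. The spiral condition (iii) gives $\sigma'_+(a)\perp\sigma(a)$ when $\sigma(a)\neq o$ (and Proposition \ref{prop:o} handles $\sigma(a)=o$), so the tangent line $\ell_0$ to $\sigma$ at $a$ has distance $|\sigma(a)|<1$ from $o$ and thus crosses $\S^1$. Since any tangent line of $\sigma$ at a regular point with $|\sigma|<1$ must cross $\S^1$, Lemma \ref{lem:GeneralizedDecomposition2} forces $\sigma$ to remain on $\ell_0$ until some maximal time $t_1\in(a,b]$, so $\sigma_1:=\sigma|_{[a,t_1]}$ is a straight line segment. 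On the portion of $\sigma_1$ with $r:=|\sigma_1|\geq 1$ one has the constant value $r\sin\alpha=|\sigma(a)|$, and the substitution $u=r^2$ reduces the horizon integral to
\[
H(\sigma_1)=2\arccos(1/R_1),\qquad R_1:=|\sigma(t_1)|,
\]
(with $H(\sigma_1)=0$ if $R_1<1$; the portion with $|\sigma_1|<1$ contributes nothing). If $t_1=b$ the spiral is this single segment, and the chain $\arccos(1/R_1)<\sqrt{R_1^2-1}\leq\sqrt{R_1^2-|\sigma(a)|^2}=L(\sigma_1)$ already yields $E(\sigma)<2$.

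Otherwise, setting $\sigma_2:=\sigma|_{[t_1,b]}$, I would apply Lemma \ref{lem:GeneralizedDecomposition2} to regular points just past $t_1$ to conclude that the right tangent $\sigma'_+(t_1)$ cannot cross $\S^1$; hence its tangent line has perpendicular distance $D\geq 1$ from $o$. Prepending to $\sigma_2$ the segment from the foot of perpendicular $q$ (where $|q|=D$) to $\sigma(t_1)$ along the tangent direction produces a curve $\tilde\sigma_2$ that starts perpendicularly and satisfies all three spiral conditions with $|\tilde\sigma_2|\geq 1$. The prior bound applied to $\tilde\sigma_2$, after subtracting the explicit horizon $2\arccos(D/R_1)$ and length $\sqrt{R_1^2-D^2}$ of the prepended segment, gives the needed estimate on $H(\sigma_2)$. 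Collecting contributions, the inequality $E(\sigma)\leq 2$ reduces to
\[
\arccos(1/R_1)-\arccos(D/R_1)+\sqrt{R_1^2-D^2}\;\leq\;\sqrt{R_1^2-|\sigma(a)|^2},
\]
and the left-hand side, viewed as a function of $D\in[1,R_1]$, has derivative $(1-D)/\sqrt{R_1^2-D^2}\leq 0$ and is thus maximized at $D=1$ with value $\sqrt{R_1^2-1}$; this is strictly less than $\sqrt{R_1^2-|\sigma(a)|^2}$ whenever $|\sigma(a)|<1$, which delivers the strict bound $E(\sigma)<2$. The main obstacle I anticipate is the careful verification that $\tilde\sigma_2$ genuinely satisfies the spiral definition (in particular, local convexity with respect to $o$ at the junction $\sigma(t_1)$, where the tangent is continuous but the curve transitions from a segment to the rest of $\sigma_2$), so that the prior-work estimate applies to it without additional hypotheses.
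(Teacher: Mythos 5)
Your argument is correct and reaches the same conclusion via a genuinely different (though related) route. The paper's proof also sets aside the case $|\sigma(a)|\geq 1$ via \cite[Prop.~2.7]{ghomi-wenk2021} and also uses Lemma~\ref{lem:GeneralizedDecomposition2} to identify a maximal initial segment and to control the one-sided tangent at its far endpoint, so the skeleton is shared. Where you diverge is in the comparison curve: the paper discards the initial segment $\sigma|_{[a,b']}$ entirely and replaces it by the segment $x\sigma(b')$ drawn along a line tangent to $\S^1$ through $\sigma(b')$, forming a new spiral $\tau$ with $|\tau|\geq 1$. It then argues $E(\sigma)<E(\tau)\leq 2$ by two qualitative observations: $L(\tau)<L(\sigma)$ (obtuse angle at $x$) and $H(\tau)=H(\sigma)$ (a line not crossing $\S^1$ separates $\sigma(b')$ from $\S^1$ iff it crosses either initial segment), so no explicit horizon formula is needed. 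You instead keep $\sigma_1$, prepend to $\sigma_2$ a segment along the actual one-sided tangent at $t_1$ down to its foot of perpendicular at distance $D\geq 1$, apply the prior bound to $\tilde\sigma_2$, and reduce everything to the explicit inequality $\arccos(1/R_1)-\arccos(D/R_1)+\sqrt{R_1^2-D^2}<\sqrt{R_1^2-|\sigma(a)|^2}$, verified by the derivative computation with the worst case at $D=1$. Your horizon formulas $H(\sigma_1)=2\arccos(1/R_1)$ and $H(\text{prepended})=2\arccos(D/R_1)$ are correct, and the $D=1$ case of your estimate is exactly what the paper's replacement achieves implicitly; your version is more computational but gives the quantitative statement for general $D$. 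The obstacle you flag --- verifying local convexity with respect to $o$ at the junction --- is genuine, and the paper faces the analogous issue at $\sigma(b')$; it resolves it by the same limiting-tangent argument you sketch (tangent lines at regular points just past the junction cannot cross $\S^1$ by Lemma~\ref{lem:GeneralizedDecomposition2}, hence their limit supports both pieces), so your plan for closing that gap is the right one.
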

\begin{proof}
If $|\sigma(a)|\geq 1$, then $E(\sigma)\leq 2$ by \cite[Prop. 2.7]{ghomi-wenk2021}. So we assume $|\sigma(a)|<1$.
We may also assume that $|\sigma(b)|>1$ for otherwise $H(\sigma)=0$ which yields $E(\sigma)=0$. Let $b'$ be the supremum of  $t\in[a,b]$ such that $\sigma([a,t])$ is a line segment. By Lemma \ref{lem:segments1}, $|\sigma(b')|\geq 1$. We may assume that $\sigma(a)$ lies on the nonnegative portion of the $y$-axis, and $\sigma([a,b'])$ lies to the right of the $y$-axis, see Figure \ref{fig:spirals}.
\begin{figure}[h]
\begin{overpic}[height=1.3in]{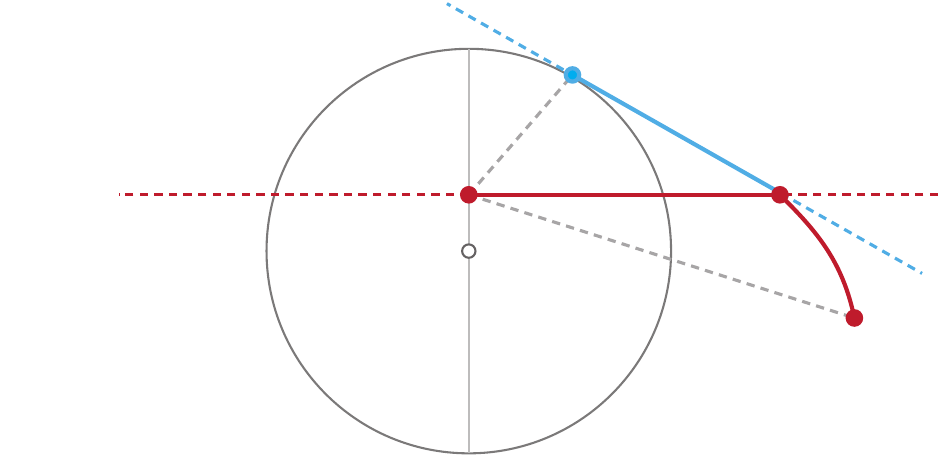}
\put(38.5,30.5){\small$\sigma(a)$}
\put(85,30.5){\small$\sigma(b')$}
\put(90,11){\small$\sigma(b'')$}
\put(60.5,43.5){\small$x$}
\put(9,27){\small$\lambda$}
\end{overpic}
\caption{Construction of the competing curve}\label{fig:spirals}
\end{figure}
If $b'< b$, then we may choose $b'<b''<b$ such that $\sigma([b',b''])$ is convex, and lies to the right of the $y$-axis. Since $\sigma$ is locally convex with respect to $o$, 
$\sigma([b',b''])$ lies below the line $\lambda$ spanned by $\sigma([a,b'])$, if $|\sigma(a)|>0$. If $|\sigma(a)|=0$, we may still assume that $\sigma([b',b''])$ lies below $\lambda$ after a reflection.
 Consider the line which passes through $\sigma(b')$ and is tangent to the upper half of $\S^1$, say at a point $x$. Let $\tau$ be the curve obtained by joining the line segment $x\sigma(b')$  to the beginning of $\sigma|_{[b',b]}$. We will show that (i) $\tau$ is a spiral, and (ii) $E(\sigma)<E(\tau)$. Then we are done, because $E(\tau)\leq 2$ since its initial height is $\geq 1$.

First we check that $\tau$ is a spiral. This is obvious if $b'=b$. So assume that $b'<b$, and let $b'<b''<b$ be as defined above. It suffices to check that $\tau$ is locally convex at $\sigma(b')$. Connect the end points of the portion $x\sigma(b'')$ of $\tau$ to $\sigma(a)$ to obtain a closed curve $\Gamma$. Note that $\Gamma$ is simple since  $x\sigma(b')$ lies above $\lambda$ while $\sigma([b',b''])$ lies below it. Let $\theta$ be the interior angle of $\Gamma$ at $\sigma(b')$. We need to show that $\theta\leq\pi$. To this end let $t_i\in (b',b'')$ be a sequence of regular points of $\sigma$ converging to $b'$, and $\ell_i$ be tangent lines of $\sigma$ at $t_i$. Then $\ell_i$ converge to a support line of $\sigma([b',b''])$ at $\sigma(b')$, which we call $\ell$. By Lemma \ref{lem:GeneralizedDecomposition2}, $\ell_i$ do not cross $\S^1$. Consequently $\ell$ does not cross $\S^1$ either. So $\ell$ also supports $x\sigma(b')$. Hence $\ell$ is a support line of $\Gamma$ at $\sigma(b')$, which yields that $\theta\leq\pi$ as desired.

It remains to check that $E(\sigma)<E(\tau)$. To see this consider the triangle $\sigma(a)x\sigma(b')$. The interior angle of this triangle at $x$ is $\geq\pi/2$, 
since $\sigma(a)$ lies on the nonnegative portion of the $y$-axis. Hence $|x\sigma(b')|<|\sigma(a)\sigma(b')|$, which yields $L(\tau)<L(\sigma)$. On the other hand,  tangent planes of $\S^2$ intersect $\R^2\simeq\R^2\times\{0\}\subset\R^3$ in lines which do not cross $\S^1$, and any such line has exactly the same number of transverse intersections with $\sigma$ as it does with $\tau$. Hence $H(\tau)=H(\sigma)$ by definition of horizon. So $E(\sigma)<E(\tau)$ as desired.
\end{proof}

\section{Proof of Theorem \ref{thm:main}}\label{sec:proof}
Set $r=1$ and let $\gamma\colon\R/L\to\R^3$ be a minimal inspection curve, as discussed in Section \ref{sec:minimal}. To establish \eqref{eq:main} it suffices to show then that $E(\gamma)\leq 2$, as outlined in Section \ref{sec:intro}. In Section \ref{sec:unfolding} we established that $E(\gamma)=E(\tilde\gamma)$ where  $\tilde\gamma\colon[0,L]\to\R^2$ is the unfolding of $\gamma$.  By Lemma \ref{lem:GeneralizedDecomposition}, $\tilde\gamma$ admits a spiral decomposition, generated by a collection of mutually disjoint open sets $U_i\subset[0,L]$, $i\in I$. 
Set $U_0:=[0,L]\setminus \cup_i\cl(U_i)$, and let $\tilde\gamma_i:=\tilde\gamma|_{\cl(U_i)}$,  $\tilde\gamma_0:=\tilde\gamma|_{U_0}$. As in the proof of Zalgaller's conjecture in \cite[Sec. 10]{ghomi-wenk2021}, we have
\be\label{eq:E-gamma}
E(\tilde\gamma)=\frac{H(\tilde\gamma)}{L(\tilde\gamma)}=\frac{1}{L(\tilde\gamma)}\sum_iH(\tilde\gamma_i)=
\frac{1}{L(\tilde\gamma)}\left(L(\tilde\gamma_0)E(\tilde\gamma_0)+ \sum_iL(\tilde\gamma_i)E(\tilde\gamma_i)\right).
\ee
By Lemma \ref{lem:segments2}, every point $t\in[0,L]$ with $|\gamma(t)|<1$ lies on a line segment in $\gamma$ with end points on $\S^2$, and thus $\tilde\gamma(t)$ belongs to a strict spiral (with origin of the spiral corresponding to the midpoint of that line segment). So $|\tilde\gamma_0|\geq 1$. Then, as described in \cite[Sec. 10]{ghomi-wenk2021},  
$
E(\tilde\gamma_0)\leq 2.
$
Furthermore
$
E(\tilde\gamma_i)\leq 2
$
for all $i$
by Proposition  \ref{prop:Esigma}.
So $E(\tilde\gamma)\leq 2$ by \eqref{eq:E-gamma}, as desired. To characterize the case of equality in \eqref{eq:main}, note that by \eqref{eq:E-gamma}, if $E(\tilde\gamma)= 2$ then $E(\tilde\gamma_i)=2$. Consequently, by Proposition \ref{prop:Esigma},  $|\tilde\gamma_i|\geq 1$. So $|\tilde\gamma|\geq 1$, which yields $|\gamma|\geq 1$.  Hence, by the proof of Zalgaller's conjecture \cite[Thm. 1.1]{ghomi-wenk2021}, $\gamma$ is the baseball curve.

\section*[Appendix]{Appendix: Higher Dimensions}\label{sec:appendix}

Here we establish a higher dimensional version of \eqref{eq:main} due to Fedor Nazarov:

\begin{thm}[Nazarov]\label{thm:nazarov}
Let $\gamma\colon[a,b]\to\R^n$ be a  curve of length $L$, and $r$ be the inradius of the convex hull of $\gamma$.  Then 
\be\label{eq:nazarov}
L\geq Cn\sqrt{n}\,r,
\ee
 where $C>0$ is an absolute constant.
\end{thm}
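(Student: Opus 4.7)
After scaling, I would assume $r=1$ and translate so that $B^n\subset\conv(\gamma)$ is centered at $o$; thus for every $u\in\S^{n-1}$ some point of $\gamma$ satisfies $\langle\gamma(t),u\rangle\ge 1$. The plan is to discretize $\gamma$, convert the inclusion $\conv(\gamma)\supset B^n$ into a covering of $\S^{n-1}$ by centrally symmetric linear slabs, and apply the Gaussian correlation inequality of Royen \cite{royden2014,latala-matlak2017} to bound the Gaussian volume of their intersection from below by the product of their individual Gaussian volumes.

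Fix $\ell\in(0,2)$ and sample $p_1,\dots,p_N\in\gamma$ at arclength spacing $\ell$, so $N\sim L/\ell$ and each subarc lies in the Euclidean ball $B(p_i,\ell/2)$. Then $\conv(p_1,\dots,p_N)\supset B^n(r_0)$ with $r_0=1-\ell/2$, which means that for every $u\in\S^{n-1}$ some $i$ satisfies $\langle p_i,u\rangle\ge r_0$. The centrally symmetric convex slabs $S_i:=\{x\in\R^n:|\langle x,p_i\rangle|\le r_0\}$ therefore have $\bigcap_i S_i\cap\S^{n-1}=\emptyset$, and since $\bigcap_i S_i$ is symmetric, convex, and contains $o$, it must lie inside $B^n$.

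Letting $Y$ be standard Gaussian on $\R^n$, Royen's inequality applied to the $S_i$ gives
\[
\prod_{i=1}^N\Pr(Y\in S_i)\;\le\;\Pr\!\left(Y\in\bigcap_{i=1}^N S_i\right)\;\le\;\Pr(\|Y\|\le 1).
\]
Each $\Pr(Y\in S_i)=\Pr(|Z|\le r_0/|p_i|)$ for $Z$ a standard real Gaussian, so this is a one-dimensional computation controlled by $|p_i|$; the right-hand side, the Gaussian volume of the unit ball in $\R^n$, decays superexponentially in $n$. Taking logarithms yields an inequality of the form $\sum_i\log(1/\Pr(|Z|\le r_0/|p_i|))\gtrsim n\log n$.

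The main obstacle is turning this product inequality into the sharp bound $L\ge Cn^{3/2}$. Each term on the left is bounded by $\log\max(1,|p_i|/r_0)+O(1)$, and the point $p_i$ lies on a curve of length $L$, so the sequence $(|p_i|)$ varies Lipschitz with step $\le\ell$. A naive bookkeeping (using $|p_i|\le L$) gives only $L\gtrsim n$; one must instead exploit the discrete-covering interpretation of the slab condition, namely that the belts $\{u\in\S^{n-1}:|\langle u,\hat p_i\rangle|\ge r_0/|p_i|\}$ cover $\S^{n-1}$, together with a balanced choice of $\ell$ to extract the extra factor of $\sqrt{n}$. This is the sphere-by-slabs covering argument that also yields Tikhomirov's result (Note \ref{note:tikhomirov}). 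The bound is sharp, as seen by the closed curve of length $\sim 2n\sqrt{2n}$ joining consecutively the $2n$ vertices $\pm\sqrt{n}\,e_i$ of the inscribed cross-polytope.
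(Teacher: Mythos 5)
Your high-level outline correctly identifies the Gaussian correlation inequality as the engine, and the setup—produce a family of centrally symmetric slabs whose intersection misses $\S^{n-1}$, hence lies in $B^n$, and compare products of Gaussian measures—is the same comparison that drives the paper's proof. But your proposal has a genuine, acknowledged gap: you derive only $L\gtrsim n$, a factor $\sqrt n$ short, and the final paragraph merely restates the missing step ("extract the extra factor of $\sqrt n$") without supplying a mechanism for it. The underlying difficulty is that your slab widths are $r_0/|p_i|$, and you have no control on the sizes $|p_i|$: when $|p_i|$ is large the slab $S_i$ is very thin and its Gaussian measure is tiny, so the lower bound $\prod_i\mu(S_i)$ degenerates. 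Choosing $\ell$ cleverly cannot fix this, because $|p_i|$ depends on where $\gamma$ sits in space, not just on the sampling rate.

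The paper gets around exactly this obstacle with two ideas absent from your sketch. First, after assuming $n$ even and $L\leq 2n\sqrt n$, it splits $[a,b]$ into $n$ equal arcs with midpoints $s_i$, chooses an $n$-dimensional subspace $H\subset\R^{2n}$ orthogonal to all $\gamma(s_i)$, and projects onto $H$. The projected curve now passes through $o$ at each $s_i$, yielding $2n$ subcurves of length $\leq\sqrt n$ each starting at $o$; this is what forces the relevant points to have controlled norm. Second, rather than sampling at a single scale $\ell$, the paper uses a \emph{dyadic} decomposition $\gamma_i(t)=\sum_k v_{ik}(t)$ with $|v_{ik}|\leq\sqrt n/2^k$ and $v_{ik}(t)$ ranging over a set of only $2^{k-1}$ possibilities independent of $t$. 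The associated slabs $S(v)=\{x:|\langle x,v\rangle|\leq\sqrt n/k^2\}$ then have half-width at least $2^k/k^2$, \emph{independent of $n$}, so the infinite product $\prod_k(1-e^{-(2^k/k^2)^2/2})^{2^{k-1}}$ converges to an absolute constant $\sqrt\delta>0$ and $\mu(A)\geq\delta^n$ across all $2n$ curves. Comparing with $\mu(B^n_r)\leq(\sqrt e\,r/\sqrt n)^n$ for $r=\delta\sqrt n/\sqrt e$ produces a point $u_0\in A$ with $|u_0|\geq\delta\sqrt n/\sqrt e$, and the telescoping sum $\sum_k\sqrt n/k^2$ then yields $\langle\gamma_i(t),u\rangle\leq 2\sqrt e/\delta$. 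Without the projection step (to force the curves through $o$) and the multiscale decomposition (to make slab widths $n$-independent), the single-scale sampling you propose cannot recover the $n^{3/2}$ exponent.
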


By \emph{absolute constant} here we mean that $C$ does not depend on $n$ or $\gamma$. A Hamiltonian path in the edge graph of the \emph{cross polytope}, i.e., the unit ball with respect to the $L^1$-norm in $\R^n$, gives an example of a curve with $L\leq 2n\sqrt{2n}r$ \cite{ahks2022}. Thus \eqref{eq:nazarov} is sharp up to the constant $C$. 
To establish \eqref{eq:nazarov}, we may set $r=1$. Furthermore, we may assume that $n$ is even. Indeed suppose that \eqref{eq:nazarov}  holds for even $n$. If $n$ is odd and bigger than $1$, then we may project $\gamma$ into $\R^{n-1}$ to obtain $L\geq C(n-1)^{3/2}\geq (C/2) n^{3/2}$. Finally, it is enough to show that if  $L\leq C n\sqrt{n}$, for some absolute constant $C$,  then the inradius of $\conv(\gamma)\leq 1$,  which means that there exists $u\in\S^{n-1}$ such that $\langle  \gamma(t), u\rangle\leq 1$ for all $t\in [a,b]$. Equivalently, if $L\leq 2n\sqrt{n}$, then $\langle  \gamma(t), u\rangle\leq C/2$. In summary, it suffices to show:

\begin{prop}\label{prop:middle}
Let $\gamma\colon [a,b]\to\R^{2n}$ be a  curve of length $\leq 2n\sqrt{n}$. Then there exists $u\in\S^{2n-1}$ such that $\langle  \gamma(t), u\rangle\leq C$ for all $t\in [a,b]$.
 \end{prop}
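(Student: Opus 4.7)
The plan is to construct the unit vector $u$ by a random argument, along the lines hinted in the introduction: cover $\S^{2n-1}$ by slabs associated to the points of $\gamma$ and use the Gaussian correlation inequality to show the slabs cannot cover the whole sphere.

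First I would discretize, choosing points $p_1,\dots,p_N$ along $\gamma$ with consecutive Euclidean distances at most $\delta$, so $N\leq L/\delta\leq 2n\sqrt{n}/\delta$. For each $i$, the ``forbidden'' set on the sphere is the spherical cap $F_i=\{u\in\S^{2n-1}:\langle p_i,u\rangle>C\}$. Showing that $\bigcup_i F_i\neq \S^{2n-1}$ produces the desired direction, and if the discretized inequality holds then the $1$-Lipschitz continuity of $\gamma$ (in arclength) gives $\langle \gamma(t),u\rangle\leq C+\delta$ for every $t\in[a,b]$, so $\delta$ is absorbed into the constant.

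To apply the correlation inequality, which requires symmetric convex sets, I would instead consider the symmetric slabs $S_i=\{g\in\R^{2n}:|\langle p_i,g\rangle|\leq\eta\}$ for a Gaussian $g\sim N(0,I_{2n})$, choosing $\eta$ comparable to $C\sqrt{2n}$ so that the bound transfers to $u=g/|g|$ in the typical scaling $|g|\sim\sqrt{2n}$. By Royen's Gaussian correlation theorem,
\[
P\Bigl(g\in\bigcap_i S_i\Bigr)\geq \prod_i P(g\in S_i),
\]
and each factor $P(g\in S_i)=P(|Z|\leq \eta/|p_i|)$ can be bounded below using the length constraint: the arclength budget $L\leq 2n\sqrt{n}$ (tuned to the critical exponent $n^{3/2}$ that matches the cross-polytope example) controls how many $p_i$'s can lie at each scale of $|p_i|$. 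Combining the resulting product lower bound with $\chi^2$-concentration of $|g|$ around $\sqrt{2n}$ produces a $g$ in the intersection with $|g|\gtrsim\sqrt{2n}$, and $u:=g/|g|$ then satisfies $|\langle p_i, u\rangle|\leq C$ as claimed.

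The delicate point is carrying out this product estimate: the naive bound involves a sum $\sum_i -\log P(g\in S_i)$ over up to $L/\delta$ terms, which must be shown to be dominated by the $\chi^2$ concentration exponent for $|g|$. This requires balancing three ingredients — the choice of slab width $\eta$, a dyadic decomposition of $\gamma$ according to the level sets of $|\gamma(t)|$ (using that excursions into deeper shells consume arclength proportional to the shell radius), and possibly a clustering of nearby $p_i$'s so that only a reduced collection of slabs enters the product. The hypothesis $L\leq 2n\sqrt{n}$ is precisely what makes this balance go through, and this is where the sharp $n^{3/2}$ exponent of the theorem is consumed.
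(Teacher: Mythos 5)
Your high-level outline (Gaussian correlation for symmetric slabs, comparison of the intersection's Gaussian measure with that of a ball, normalizing a surviving Gaussian vector) is the same strategy the paper uses, but two essential steps are missing, and the naive discretization you start with does not survive the product estimate.

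First, a uniform discretization of $\gamma$ into points $p_i$ with $N\lesssim L/\delta$ leads to slabs whose Gaussian measure cannot be controlled: for points $p_i$ near the far end of the curve, $|p_i|$ can be as large as $n\sqrt n$, so with any width $\eta$ of order $\sqrt n$ the probability $P\bigl(|\langle p_i, g\rangle|\leq\eta\bigr)=P\bigl(|Z|\leq \eta/|p_i|\bigr)$ is of order $1/n$, and the product degenerates. Your remark that the ``delicate point'' is the product estimate is exactly right, but the resolution requires two specific constructions that you name only vaguely. (i) A dimension-halving projection: the paper picks $n$ midpoints $\gamma(s_1),\dots,\gamma(s_n)$, projects into the $n$-dimensional subspace $H$ orthogonal to all of them, and thereby reduces to $2n$ curves of length $\leq\sqrt n$ each starting at $o$. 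This step annihilates the large-norm obstruction above; your proposal works directly in $\R^{2n}$ and has no analogue. (ii) A telescoping dyadic decomposition in the \emph{parameter}, not in the level sets of $|\gamma(t)|$: each projected piece $\gamma_i(t)$ is written as $\sum_k v_{ik}(t)$ with $|v_{ik}|\leq \sqrt n/2^k$ and $v_{ik}$ drawn from a fixed set $V_{ik}$ of size $2^{k-1}$, and the slabs $S(v)=\{|\langle x,v\rangle|\leq\sqrt n/k^2\}$ are widened polynomially in $k$ precisely so that the exponential count $2^{k-1}$ is beaten by the super-exponential decay of $e^{-a_k^2/2}$ with $a_k=2^k/k^2$, giving a product bounded below by $\delta^n$ for an absolute $\delta>0$. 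Without the dimension reduction in (i) and without the explicit scaling of slab widths versus dyadic step sizes in (ii), the product bound that you defer to ``clustering'' or ``a dyadic decomposition according to level sets'' is not established, so the proof as proposed has a genuine gap at its central step.

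One smaller point: the paper does not need $\chi^2$-concentration of $|g|$; it simply compares $\mu(A)\geq\delta^n$ against $\mu(B^n_r)\leq(\sqrt e\, r/\sqrt n)^n$ to conclude some $u_0\in A$ has $|u_0|\geq r=\delta\sqrt n/\sqrt e$, which is a cruder but sufficient volume bound.
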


To prove the above proposition,  we again assume that $\gamma$ has constant speed. Let $t_i\in[a,b]$, $i=0,\dots,n$, be equidistant points with $t_0:=a$, $t_{n}:=b$, and set $s_i:=(t_{i-1}+t_i)/2$ for $i=1,\dots, n$.  Let $H$ be an $n$-dimensional subspace of $\R^{2n}$ which is orthogonal to each $\gamma(s_i)$, and $\ol\gamma$ be the projection of $\gamma$ into $H$. Then $\ol\gamma|_{[t_{i-1}, s_i]}$, $\ol\gamma|_{[s_i, t_{i}]}$ are curves of length $\leq\sqrt{n}$ with one end at $o$, since $\gamma$ has constant speed.  So, identifying $H$ with $\R^n$, we have reduced Proposition \ref{prop:middle} to:

\begin{prop}\label{prop:2n}
Let $\gamma_i\colon[a,b]\to\R^n$, $i=1,\dots,2n$, be curves of length $\leq\sqrt{n}$ with $\gamma_i(a)=o$. Then there exists $u\in\S^{n-1}$ such that $\langle  \gamma_i(t), u\rangle\leq C$ for all $t\in[a,b]$.
\end{prop}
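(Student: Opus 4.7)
The plan is to find $u$ by producing a Gaussian vector $g \in \R^n$ which lies in a large intersection of symmetric slabs yet has atypically large norm; the existence of such a $g$ will follow from the Gaussian correlation inequality combined with the strong concentration of $|g|$ around $\sqrt{n}$. Let $\mu$ denote the standard Gaussian measure on $\R^n$, and for each $i = 1,\dots,2n$ and a large absolute constant $M$ to be chosen, introduce the symmetric convex set
\[
B_i := \bigl\{g \in \R^n : \textstyle\sup_{t \in [a,b]} |\langle \gamma_i(t), g\rangle| \leq M\sqrt{n}\bigr\}.
\]
Each $B_i$ is an intersection of symmetric slabs, so the Gaussian correlation inequality yields $\mu(\bigcap_i B_i) \geq \prod_i \mu(B_i)$.

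To bound each $\mu(B_i)$ from below, I would observe that $F_i(g) := \sup_t |\langle \gamma_i(t), g\rangle|$ is the supremum of a Gaussian process indexed by the arclength-parameterized curve $\gamma_i$ of length $L_i \leq \sqrt{n}$. Dudley's chaining inequality, applied with the covering number estimate $N(\epsilon) \lesssim L_i/\epsilon$, gives $\mathbb{E}[F_i(g)] \leq C_0 L_i \leq C_0 \sqrt{n}$ for an absolute $C_0$. Since $F_i$ is $\sqrt{n}$-Lipschitz on $\R^n$ (its Lipschitz constant being $\sup_t |\gamma_i(t)| \leq L_i$), Borell's Gaussian concentration inequality yields $\mu(B_i) \geq 1 - \exp(-(M-C_0)^2/2)$, a bound crucially \emph{independent of $n$}. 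Consequently
\[
\mu\Bigl(\textstyle\bigcap_i B_i\Bigr) \;\geq\; \bigl(1 - e^{-(M-C_0)^2/2}\bigr)^{2n} \;\geq\; \exp\!\bigl(-4n\, e^{-(M-C_0)^2/2}\bigr).
\]

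Standard $\chi^2$-concentration (e.g.\ Laurent--Massart) gives $\mu(\{|g| < \sqrt{n}/2\}) \leq e^{-c_1 n}$ for some absolute $c_1 > 0$. Choosing $M$ a sufficiently large absolute constant so that $4\, e^{-(M-C_0)^2/2} < c_1$, we obtain $\mu(\bigcap_i B_i) > \mu(\{|g| < \sqrt{n}/2\})$, hence $\bigcap_i B_i \cap \{|g| \geq \sqrt{n}/2\}$ has positive Gaussian measure and in particular is nonempty. For any $g$ in this set, $u := g/|g| \in \S^{n-1}$ satisfies
\[
\langle \gamma_i(t), u\rangle \;\leq\; \frac{|\langle \gamma_i(t), g\rangle|}{|g|} \;\leq\; \frac{M\sqrt{n}}{\sqrt{n}/2} \;=\; 2M
\]
for every $i$ and $t$, so the proposition holds with $C := 2M$.

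The main obstacle is the delicate balancing in this last step: the correlation inequality only produces $\mu(\bigcap_i B_i) \geq \exp(-O(n)\cdot p_0)$, where $p_0$ is the single-slab failure probability, and this must beat the exponentially small probability that $|g| < \sqrt{n}/2$. The argument succeeds with $C$ absolute precisely because both Dudley's bound on $\mathbb{E}[F_i]$ and the Lipschitz constant of $F_i$ scale like $\sqrt{n}$, matching the length constraint $L_i \leq \sqrt{n}$, so $p_0$ can be driven to a \emph{dimension-free} small constant by taking $M$ an absolute constant.
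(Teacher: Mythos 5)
Your proof is correct, but it takes a genuinely different (and more ``black-box'') route than the paper's, which is worth comparing. The paper performs the chaining \emph{by hand}: it writes $\gamma_i(t)=\sum_k v_{ik}(t)$ via a dyadic decomposition with $|v_{ik}(t)|\le\sqrt n/2^k$, forms the intersection $A$ of the individual slabs $S(v)$ for $v$ in the finite dyadic sets $V_{ik}$, and applies the correlation inequality \emph{only for slabs} --- that is, only \v{S}id\'ak's 1967 lemma --- to get $\mu(A)\ge\delta^n$. It then compares $\mu(A)$ to the Gaussian measure of a Euclidean ball by a direct volume estimate. You instead package all the slabs belonging to a single curve $\gamma_i$ into the symmetric convex body $B_i=\{g:\sup_t|\langle\gamma_i(t),g\rangle|\le M\sqrt n\}$, control $\mu(B_i)$ by Dudley's entropy bound ($\mathbb{E}\sup_t|\langle\gamma_i(t),g\rangle|\lesssim L_i\le\sqrt n$, using that the canonical metric is just the Euclidean metric on the curve so $N(\epsilon)\lesssim L_i/\epsilon$, with the $\sup X_t$-to-$\sup|X_t|$ passage justified by $\gamma_i(a)=o$ and symmetry of $g$) together with Borell's concentration for the $\sqrt n$-Lipschitz function $F_i$, then apply the Gaussian correlation inequality to the $2n$ sets $B_i$, and finally replace the paper's ball-volume estimate with Laurent--Massart $\chi^2$ concentration. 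The trade-off is instructive: your route is shorter given modern toolkits, but it genuinely requires the correlation inequality for arbitrary symmetric convex bodies (Royen's theorem), since the desired inequality $\mu(\cap_i B_i)\ge\prod_i\mu(B_i)$ does not reduce to \v{S}id\'ak's slab case. The paper's route is longer but self-contained at a more elementary level, needing only \v{S}id\'ak, and incidentally produces an explicit absolute constant along the way, which the paper exploits in Note~\ref{note:tikhomirov}. Both arguments work for the same structural reason you identify at the end: the entropy bound and the Lipschitz constant both scale as $\sqrt n$, so the single-body failure probability is a dimension-free constant that can be driven below the rate $c_1$ in the small-ball/ball-volume estimate.
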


To prove the last proposition we employ the standard Gaussian measure, which is defined for Borel sets $A\subset\R^n$ as
$$
\mu(A):=\frac{1}{(\sqrt{2\pi})^n}\int_A e^{-|x|^2/2}\;d\lambda(x),
$$
where $\lambda$ is the $n$-dimensional Lebesgue measure. We also record that if $K_i$ are a family of convex sets which are symmetric with respect to $o$, then
\be\label{eq:gaussian}
\mu\left(\bigcap_i K_i\right)\geq \prod_i \mu(K_i)
\ee
by the Gaussian correlation inequality  \cite{royden2014, latala-matlak2017}. Here we need this fact only for slabs, which had been established in \cite{sidak1967}.

\begin{proof}[Proof of Proposition \ref{prop:2n}]
We set $[a,b]=[0,1]$ and assume that $\gamma_i$ have constant speed.
For every $t\in [0,1]$ and $i$ there exist vectors $v_{ik}(t)\in\R^n$,  such that 
$$
\gamma_i(t):=\sum_{k=1}^\infty v_{ik}(t), \;\;\quad\text{and}\quad\;\; |v_{ik}(t)|\leq \frac{\sqrt n}{2^k}.
$$
To generate these vectors, set $t_0:=0$, and let $t_k:=t_{k-1}-1/2^k$, if $t< t_{k-1}$, and $t_k:=t_{k-1}+1/2^k$ otherwise. Then we set $v_{ik}(t):=\gamma_i({t_{k}})-\gamma_i(t_{k-1})$. Note that each $v_{ik}(t)$ is chosen from a set $V_{ik}$, of cardinality $2^{k-1}$, which is independent of $t$. Now consider the slabs
$$
S(v):=\left\{\,x\in\R^n \,\middle\vert\; \big|\langle x,v\rangle\big|\leq\frac{\sqrt{n}}{k^2}\,\right\},\quad v\in V_{ik},
$$
which have width $2(\sqrt{n}/k^2)/|v|\geq 2(2^k/k^2)$, 
and set
$$
A:=\bigcap_{i=1}^{2n}\,\;\bigcap_{k=1}^\infty\bigcap_{v\in V_{ik}} S(v).
$$
By Fubini's theorem, and a standard estimate for the Gaussian integral,
$$
\mu\big(S(v)\big)
\geq
\frac{1}{\sqrt{2\pi}}\int_{-a_k}^{a_k} e^{-t^2/2}\,dt
\geq
1-e^{-a_k^2/2},
$$
where $a_k:=2^k/k^2$.
So by \eqref{eq:gaussian}, 
\begin{eqnarray*}
\mu(A)
\geq
\prod_{i=1}^{2n}\;\prod_{k=1}^\infty \prod_{v\in V_{ik}}\mu\big(S(v)\big)
\geq
\left(\prod_{k=1}^\infty\left(1-e^{-a_k^2/2}\right)^{2^{k-1}}\right)^{2n}.
\end{eqnarray*}
Since $\ln(1-e^{-x})\geq -2e^{-x}$ for $x\geq 32/81$, which is the smallest value of $a_k^2/2$ (achieved for $k=3$), we have
\begin{eqnarray*}
\prod_{k=1}^\infty \left(1-e^{-a_k^2/2}\right)^{2^{k-1}}
&=&
\exp\left(\sum_{k=1}^\infty 2^{k-1}\ln\left(1-e^{-a_k^2/2}\right)\right)\\
&\geq&
\exp\left(-\sum_{k=1}^\infty 2^ke^{-a_k^2/2}\right)\;\;=:\;\;\sqrt{\delta}\;\;>\;\;0.
\end{eqnarray*}
So we conclude that $\mu(A)\geq\delta^n$ where $\delta>0$ is an absolute constant.
Next note that, if $B_r^n$ is the ball of radius $r$ centered at $o$ in $\R^n$, with volume $|B_r^n|$, then
$$
\mu(B^n_r)
\le  
\frac{|B^n_r|}{(\sqrt{2\pi})^{n}}
=
\left(\frac{\sqrt{e}\, r}{\sqrt n}\right)^n   \frac{|B^n_{\sqrt n}|}{ (\sqrt{2\pi})^{n} (\sqrt{e})^{n}}
\le  
\left(\frac{\sqrt{e}\,r}{\sqrt n}\right)^n  \mu(B^n_{\sqrt n})
\le  
\left(\frac{\sqrt{e}\, r}{\sqrt n}\right)^n.
$$
So  if  $r:=\delta\sqrt{n}/\sqrt e$, then $\mu(B^n_r)\leq\delta^n \leq \mu(A)$. Consequently, $A\not\subset\inte(B^n_r)$ which means that there exists $u_0\in A$ with $|u_0|\geq r$. Now setting $u:=u_0/|u_0|$, we have
$$
\langle\gamma_i(t), u\rangle
=
\sum_{k=1}^\infty\langle v_{ik},u\rangle
\leq 
\frac{1}{r}\sum_{k=1}^\infty\langle v_{ik},u_0\rangle
\leq
\frac{\sqrt e}{\delta\sqrt{n}}\sum_{k=1}^\infty\frac{\sqrt{n}}{k^2}\leq\frac{2\sqrt e}{\delta}=:C,
$$
as desired.
\end{proof}

\begin{note}\label{note:tikhomirov}
When $\gamma_i$ in Proposition \ref{prop:2n} trace lines segments,  we obtain the following result in discrete geometry:
if $N\leq 2n$ points in $\R^n$ contain $\S^{n-1}$ within their convex hull, then at least one of them has distance $\geq \sqrt{n}/C$ from $o$. Equivalently, if $N\leq 2n$ disks of geodesic radius $\rho$ cover $\S^{n-1}$, then  $\cos(\rho)\leq C/\sqrt{n}$, which had been observed earlier by Tikhomirov \cite{tikhomirov2015}. Furthermore, proof of Proposition \ref{prop:2n} allows an estimate for $C$ as follows. If $\gamma_i$ trace line segments, we may set $k=1$. Then $\mu(S(v))\geq \int_{-2}^{2}e^{-t^2/2}dt/\sqrt{2\pi}\geq 0.95$. So  $\delta=(0.95)^2$, which yields $C=\delta/(2\sqrt{e})\simeq 3.65$. It has been conjectured that the optimal value of $C$ is $1$, which would correspond to the case where the  points form the vertices of a cross polytope \cite[Conj.1.3]{boroczky-wintsche2003}. This has been shown only for 
$n=3$ \cite{toth1943},  see \cite[p. 34]{toth1964}, and $n=4$ \cite{dlmz2000}.
\end{note}


\section*{Acknowledgments}
We thank Fedor Nazarov for communicating the proof of Theorem \ref{thm:nazarov} to us, and Galyna Livshyts for useful discussions and interest in this problem. Thanks also to the anonymous referee for suggestions to improve the exposition of this work.

\bibliography{references}

\end{document}